\newtheorem{theorem}{Theorem}[section]
\newtheorem{lemma}[theorem]{Lemma}
\newcommand\keywords[1]{\textbf{Keywords}: #1}
\numberwithin{equation}{section}
\title{An Alternating Direction Method of Multipliers for Inverse Lithography Problem}
\author{Junqing Chen\thanks{Department of Mathematical Sciences, Tsinghua University, Beijing 100084, P.R. China.  (\texttt{jqchen@tsinghua.edu.cn}, \texttt{liuhb19@mails.tsinghua.edu.cn}).
} \and Haibo Liu\footnotemark[2]}
\date{March 21, 2023}
\begin{document}

\maketitle

\begin{abstract}
We propose an alternating direction method of multipliers (ADMM) to solve an optimization problem stemming from inverse lithography.  The objective functional of the optimization problem includes three terms: the misfit between the imaging on wafer and the target pattern, the penalty term which ensures the mask is binary and the total variation regularization term. By variable splitting, we introduce an augmented Lagrangian for the original objective functional. In the framework of ADMM method, the optimization problem is divided into several subproblems. Each of the subproblems can be solved efficiently. We give the convergence analysis of the proposed method. Specially, instead of solving the subproblem concerning sigmoid, we solve directly the threshold truncation imaging function which can be solved analytically.  We also provide many numerical examples to illustrate the effectiveness of the method.
\end{abstract}

\keywords{ Inverse lithography techniques,  ADMM framework, total variation regularization.}

\section{Introduction}\label{sect1}

Optical lithography system plays a critical role in semiconductor industry.  The optical lithography system can print mask patterns onto the wafer through an optical system.  As shrinkage of integrated circuit device size and subject to the resolution limit of the optical system, the diffraction of mask makes the patterns on wafer distorted very much. To remedy that, many resolution enhancement techniques are proposed and used extensively in optical lithography.  Optical proximity correction(OPC) is the most popular resolution enhancement method. With optical proximity correction, people adjust the mask layout such that the output pattern approximates the desired one. OPC can be divided into two categories: rule based OPC and model based OPC. Rule based method modifies the mask layout and compensates for the warping in local features empirically, which is easy to implement. Model based method relies on the physical and mathematical theories of optical imaging. With model based OPC, the mask pattern is represented by a pixel-wise function or a level-set function, then the OPC process is modeled as an inverse problem. Usually, the inverse problem is formulated as a non-convex optimization problem about the mask pattern. So the model based OPC is usually called inverse lithography technology(ILT) \cite{pang2006inverse}.  The objective functional of this inverse problem is the misfit between the image on wafer and the target pattern. The image on wafer is modeled by threshold and truncation of aerial image projected by the optical system and the threshold is used to mimic the exposure process of photoresist on wafer. By solving the inverse problem, ILT can adjust the shape of mask such that the mask gets close to the optimal one.  It is known that the aerial image of mask is approximated by Abbe formula which can be regarded as convolution \cite{goodman2005}. Besides, the threshold function is discontinuous, then the objective functional of inverse lithography is non-smooth and non-convex, and traditional optimization algorithms are difficult to achieve good results.

Since the importance of the inverse lithography problem, tremendous efforts have been devoted to solve the problem in the past decades. Based on the pixel-wise mask, the inverse lithography problem was stated as nonlinear, constrained minimization problems, and linear, quadratic, and nonlinear formulations of the objective function were considered in \cite{granik2006fast}. The steepest descent methods were proposed in \cite{peng2011gradient, ma2011pixel}. 
A conjugate gradient method was suggested to accelerate the computation in \cite{ma2011pixel}. And there are many other works related to the gradient-based method such as \cite{chan2008inverse,chan2008initialization, ma2007generalized, poonawala2007double, poonawala2008model}. The level set method is another effective way to solve inverse lithography problem. With this method, the target pattern, mask pattern, and wafer pattern are represented by level set functions, then the inverse lithography problem is regarded as a shape and topology optimization problem \cite{shen2009level, shen2019semi}. There are many other miscellaneous methods in inverse lithography, such as the genetic algorithm \cite{erdmann2004toward}, the route of particle swarm optimizer combined with the adaptive nonlinear control strategy \cite{sun2021global}, deep convolution neural network methods \cite{yang2019gan, yang2022generic}. A fuller review of inverse lithography techniques could be found in \cite{pang2021inverse}.

It is known that the inverse lithography problems are ill-posed. Regularization method is an efficient way to remedy the ill-posedness \cite{Engl2000}. The regularization framework was proposed to control the tone and complexity of the synthesized masks\cite{poonawala2007mask}, $L^1$-regularization, wavelet and total-variation(TV) regularizations are discussed in \cite{ma2007generalized}. Sparsity and low-rank regularization terms are used to constrain the solution space and reduce the mask complexity. The split Bregman algorithm is used to solve the inverse optimization problem with sparsity($L^1$ norm) and low rank(nuclear norm) regularizations\cite{ma2019nonlinear}. Specially, the TV regularization is much attractive due to its good edge-preserving property\cite{strong2003edge}. An objective functional that consists of several total-variation regularization terms is introduced in \cite{choy2012robust}, where the total variations about mask and image on wafer are both considered. But the TV regularization is not 
differentiable and it is difficult to implement. Some methods are proposed to deal with the TV-regularization such as smoothness method \cite{chen1999augmented}, dual methods \cite{choy2012robust,chambolle2004algorithm,chan1999nonlinear} and Bregman iteration method\cite{osher2005iterative}.

While various computational schemes have been proposed solving the inverse lithography problems, the computing process of ILT is still complicated. Especially when there are several regularization terms in the loss function \cite{choy2012robust}. Furthermore, since the optimization problem in optical lithography is not convex, the convergence analysis of the solution algorithms is lacking in the literature. The ADMM method \cite{gabay1976dual} has been found to work well in solving non-smooth and non-convex problems, such as phase retrieval problem\cite{wen2012alternating}, compressive sensing \cite{chartrand2013nonconvex}, noisy color image restoration \cite{lai2014splitting}. 
Besides, the convergence analysis of ADMM method has made a lot of progress in recent years \cite{wang2019global}. 

%In the present work, we introduce the TV regularization of mask to the inverse lithography problem 
In the present work, we focus on inverse lithography problem with TV regularization and propose an ADMM method to deal with the regularization.
There are several advantages of our method. Firstly, comparing with \cite{choy2012robust}, our method can avoid the tedious computation of total variation of image on wafer and focuses on the TV regularization of mask. 
Secondly, in the framework of ADMM, we split the original optimization problem to three subproblems to avoid the expensive computation of the gradient of a composite function about sigmoid and convolution.
Thirdly,  we establish the convergence of proposed ADMM method under certain conditions. 
Finally, when dealing with the subproblem about the sigmoid function, we use its original nonsmooth form to accelerate the computation.
% since the loss function contains a non-smooth component and a non-convex component.
%On the other hand, 
%Then, the main contributions of this work are threefold: 
%(1) We design a specific splitting scheme to avoid the computation of the gradient of optimization objective function. The part concerning truncation exposure function is solved in the framework of FISTA method.
%(2) In the optimization process, to avoid the nonlinear iteration method, we use the truncation exposure function directly instead of its sigmoid smoothing function to accelerate the algorithm.
%(3) We establish the convergence of proposed ADMM method under certain conditions since the loss function contains a non-smooth component and a non-convex component .

The paper is organized as follows. In Section \ref{sect2}, we introduce the inverse lithography problem and prove the stability of the solution to inverse problem with total variation regularization. We develop the ADMM method for inverse lithography problem and prove the convergence of the proposed algorithm in Section \ref{sect3}. Some numerical examples are provided in Section \ref{sect4} to show the effectiveness of our algorithm. We draw some conclusion in Section \ref{sect5}.

\section{Inverse lithography problem}\label{sect2}
\subsection{Lithography system and imaging process}
The Figure (\ref{projection_printing}) describes the most important elements inside a photo-lithography system.
\begin{figure}[ht]
	\centering
	\includegraphics[scale=0.3]{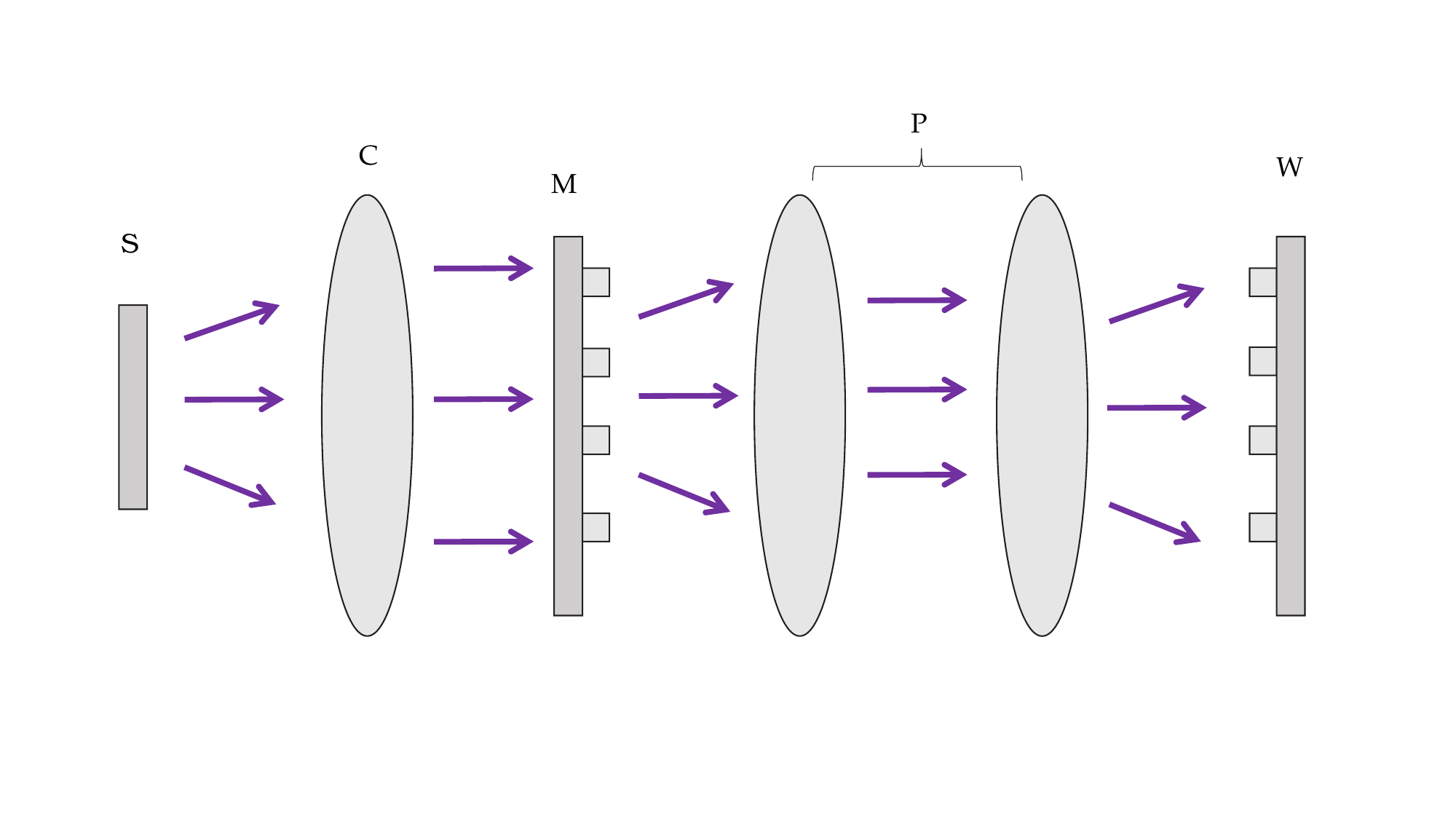}
    \caption{Lithography system}
	\label{projection_printing}
\end{figure}
\noindent The light emitted by the source \{\textbf{S}\} goes through the condenser \{\textbf{C}\}, 
becomes parallel light and then projects the mask pattern \{\textbf{M}\} by the projection lens \{\textbf{P}\} onto the wafer \{\textbf{W}\}. 
With the help of photochemical reaction in photo-resist on wafer, this optical lithography process can transfer the mask pattern to wafer pattern for next stage use.
Under the assumption that the mask is thin, the projection processing can be described by Fourier Optics \cite{goodman2005}. 
Using the Kirchhoff approximation, with the help of Abbe formulation, the aerial image
on the wafer with partially coherent illumination can be approximately described as
\begin{equation*}
	I^a(U)(\bm{r})=\iint_{-\infty}^{\infty}U(\boldsymbol{r_1})U^*(\boldsymbol{r_2})\gamma(\boldsymbol{r_1}-\boldsymbol{r_2})H(\boldsymbol{r}-\boldsymbol{r_1})H^*(\boldsymbol{r}-\boldsymbol{r_2})d\boldsymbol{r_1}d\boldsymbol{r_2},
\end{equation*}
where $\bm{r}=(x,y), \boldsymbol{r_1}=(x_1,y_1 )$, and $\boldsymbol{r_2}=(x_2,y_2)$. $U(\boldsymbol{r})$ is the mask pattern, $\gamma(\boldsymbol{r_1}-\boldsymbol{r_2})$ is the complex degree of coherence, and $H(\boldsymbol{r})$ represents the amplitude impulse response of the optical system. The complex degree of coherence $\gamma(\boldsymbol{r_1}-\boldsymbol{r_2})$ is generally a complex number, whose magnitude represents the extent of optical interaction between two spatial locations $\boldsymbol{r_1}$ and $\boldsymbol{r_2}$ of the light source\cite{wong2001resolution}. 
Specially, for completely coherent system, namely, the light is emitted by a point source, $\gamma(\boldsymbol{r_1}-\boldsymbol{r_2})=1$ and the aerial image can be simplified as
\begin{eqnarray*}
	I_{aerial}(U)(\bm{r})=|H(\textbf{r})*U(\textbf{r})|^2,
\end{eqnarray*}
where * denote the convolution operation. 
As for the point spread function $H(\textbf{r})$ which describes the response of an imaging system to a point source, 
\begin{equation*}
	H(x,y)={F}^{-1}[h(f,g)],
\end{equation*}
where $F$ is the Fourier transform and $h(f,g)$ is the projection lens transfer function. 

In Fourier optics, the projection lens acts as a low pass filter for Fourier transform of the mask. The frequencies which are lower than the projection lens cutoff frequency can pass through the lens and frequencies larger than the cutoff frequency are lost. The cutoff frequency $f_{cut}$ is
\begin{equation*}
	f_{cut}=\frac{NA}{\lambda},
\end{equation*}
where $\lambda$ is the wavelength of the optic wave and $NA$ is the numerical aperture of the projection lens \cite{wong2005optical,ma2011computational}.
The effect of the projection lens is described as a circular transfer function
\begin{equation*}
	h(f,g)= \left\{
	\begin{array}{ll}
		1 & if \sqrt{f^2+g^2}\leq\frac{NA}{\lambda}, \\
		0 & otherwise.
	\end{array}
	\right.
\end{equation*}
Then the corresponding point spread function reads as 
\begin{equation*}
	H(x,y)=\frac{J_1(2\pi r NA/\lambda)}{2\pi r NA/\lambda},
\end{equation*}
and is shown in Figure \ref{bragg}.
\begin{figure}[ht]
	\centering
	\includegraphics[scale=0.4]{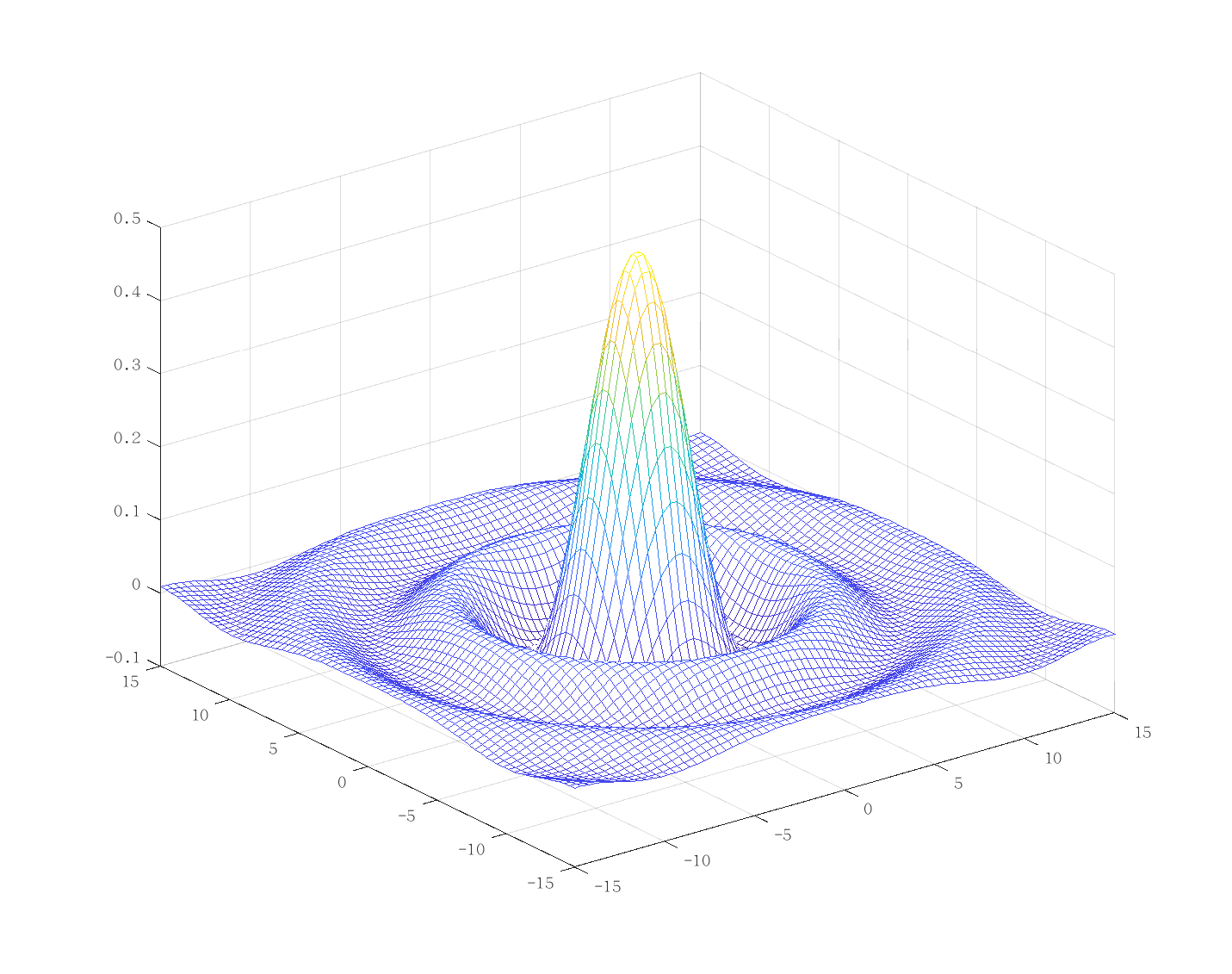}
	\caption{Point spread function for lens with circular aperture.}
	\label{bragg}
\end{figure}
In a real optical lithography system, defocus is an important property that should be considered. It corresponds to a vertical shift of the substrate relative to the plane of best focus. Defocus can be treated similarly with an aberration which describes deviation from the ideal wave front \cite{goodman2005}, even though it is actually not an aberration. Let $W(f,g)$ be an aberration function, the according transfer function for the pupile function is written as 
\[
h(f,g)=
\left\{
\begin{array}{ll}
	e^{-i\frac{2\pi}{\lambda}W(f,g)} & if \quad |(f,g)|\leq \frac{NA}{\lambda}, \\
	0 & otherwise.
\end{array}
\right.
\]
Where the defocus aberration function is 
\begin{equation}\label{defocus}
	W(f,g)=D\sqrt{1-(f^2+g^2)\lambda^2}.
\end{equation}
 and $D$ represents the defocus.

The last step in the simulation of lithography process is the exposure of the photoresist. If we regards the output pattern on wafer is binary, the resist effects could be modeled as a hard threshold operation $T(x)$
\begin{equation}\label{threshd}
	T(x)=\left\{
	\begin{aligned}
		1 & , & x\geq tr, \\
		0 & , & x<tr,
	\end{aligned}
	\right.
\end{equation}
and $tr$ is the threshold.  Then the exposure pattern on wafer is
\begin{eqnarray*}
	I_t(U)(x,y)&=T(I_{aerial}(U)(x,y)).
\end{eqnarray*}
Clearly, $T(x)$ is not a differentiable function. For the optimization method based on gradient, this threshold model is not suitable. Usually, $T$ is approximated with a sigmoid function:

\begin{equation*}
	Sig_a(x)=\frac{1}{1+exp(-a(x-tr))}.
\end{equation*}
For this smoothness, the function value  tends to 1 when $x$ is larger than $tr$, tends to 0 otherwise. The coefficient $a$ controls the steepness. Figure \ref{sigmoid} is the description of a sigmoid function.
\begin{figure}[ht]
	\centering
	\includegraphics[scale=0.6]{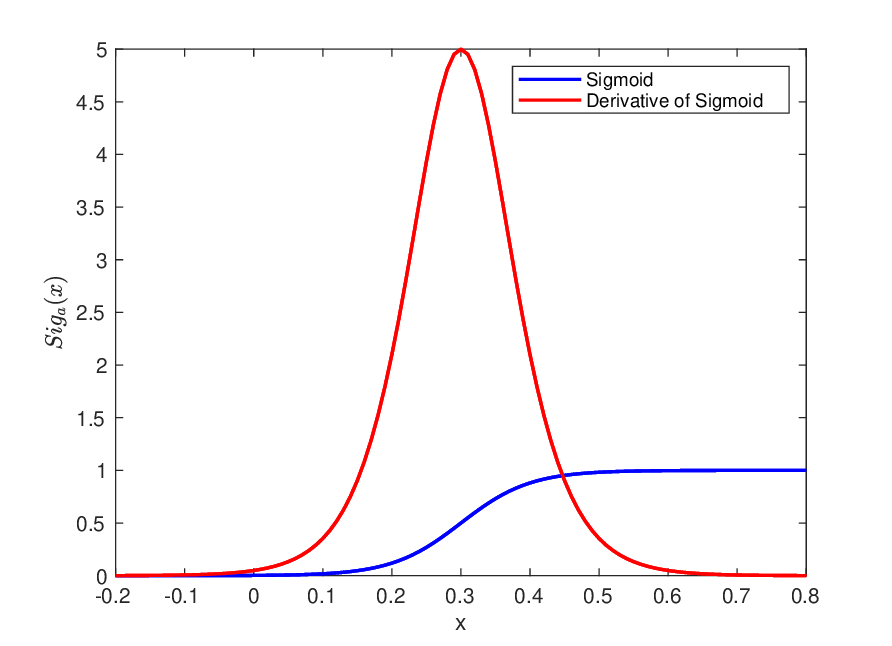}
	\caption{The curves of Sigmoid function and its derivative with $a=20, tr=0.3$.}
	\label{sigmoid}
\end{figure}
Finally, we have the approximate smooth exposure image on wafer
\begin{equation}\label{sigm}
	I(U)(x,y)=Sig_a(|(H* U)(x,y)|^2),
	\nonumber 
\end{equation}
which is the image model we used in the following sections.
\subsection{BV space and Total variation}
Let $\Omega$ be a 2-D bounded Lipschitz domain. In the followings,  we assume that the mask belongs to the BV-space:
$$BV(\Omega)=\{U\in L^1(\Omega):sup \int_{\Omega}{U(x)div\phi(x)dx}<\infty,\phi \in C_c^1(\Omega,R^n),|\phi| \le 1 \},$$
which is equipped with the norm
\begin{equation*}
	\|U\|_{BV}=\|U\|_{L^1(\Omega)}+\|U\|_{TV},
\end{equation*}
where
\begin{equation*}
	\|U\|_{TV}=\int_\Omega |DU|dx\equiv\sup_{\phi\in C^1_c(\Omega),|\phi|\leq 1}\int_\Omega U div \phi dx,
\end{equation*}
is the total variation of $U$. It is known that the BV-space is compactly embedding to $L^1(\Omega)$, which is crucial for the existence and stability of solutions to the inverse problem.  The total variation measures the total image gradient and possesses stronger constraining effects. The regularization based on total variation  has the capability of denoising, deblurring, and yielding sharp edges and is applied in many fields as we have mentioned in Section \ref{sect1}.

\subsection{The inverse lithography problem}
Within the confines of geometrical optics, the image of a point is sharp. But the actual image is smeared because of diffraction. For an imaging system with a circular aperture, the resolution is proportional to
$$\frac{\lambda}{NA},$$
which is always bigger than the critical size of the target pattern on wafer.
So the image on the wafer is blurred. To remedy the blurring and improve the resolution, the resolution enhancement techniques(RETs) have been proposed to minimize mask distortions as they are projected onto wafers. There are many kinds of techniques included in RETs. For more details of RETs technique, we recommend \cite{ma2011computational} and\cite{wong2005optical}.

Among the RETs, inverse lithography technology(ILT) is a promising method and used widely. ILT determines the mask $U(x,y)$ that produces the desired on-wafer results $I(x,y)=I(U)$ by formulating the resolution enhancement process as an inverse problem. This process could be modeled as a optimization problem
\begin{equation*}
	min_{U}||I(U)-I||_{L^2(\Omega)},
\end{equation*}
where U is the unknown mask pattern to be found, I(U) represents the computed field (\ref{sigm}), $I$ is desired lithography pattern.

Over the past years, some of the ideas have been innovatively introduced into ILT and  significantly enhance the quality of reconstructed mask \cite{choy2012robust,li2013efficient,ma2019nonlinear,poonawala2006opc}. Basically, all these methods are based on constructing a cost function comprising one quadratic $L^2$ norm error term and some regularization terms . The regularization terms are used to deal with the ill-posedness of the inverse problem. Then the cost function is expressed as
\begin{equation}\label{origin-reg}
	E(U)=||I(U)-I||^2_{L^2(\Omega)}+\alpha R(U),
\end{equation}
where $\alpha$ is the regularization parameter, and $R(U)$ represents the regularization. 
%\subsection{The inverse problem}
We assume that the mask is a BV image and binary valued \cite{SIAMB0000033}, which means that the mask $U\in K_0=\{U(x)|U(x)\in BV(\Omega), U(x)=\{0,1\}\}$. Then with \eqref{sigm} and \eqref{origin-reg} in hand, by considering the total variation regularization, the original ILT problem could be modeled as the following constraint optimization problem
\begin{eqnarray}\label{eq1}
	&&\min_{U} ||Sig_a(|H*U|^2)-I||^2_{L^2(\Omega)}+\alpha \|U\|_{TV}\\
	&&\quad\quad s.t. ~U\in K_0. \nonumber
\end{eqnarray}
Since $U(x)$ is binary-valued in $\{0, 1\}$, problem \eqref{eq1} will cause a nonlinear integer programming problem which is NP-hard. Besides, $K_0$ is a non-convex set. So we relax the binary constrain $ U(x)=\{0,1\}$ with $ 0\leq U \leq 1, U(x)\in BV(\Omega)$, add a penalty term $||U(1-U)||_1$.
Then let $K=\{U(x)|U(x)\in BV(\Omega), 0\leq U(x)\leq 1\}$, we have
\begin{eqnarray}\label{objfun}
	&&\min_U   ||Sig_a(|H*U|^2)-I||^2_{L^2(\Omega)}+\beta_1 ||U||_{TV}+\beta_2||U(1-U)||_{L^1(\Omega)}\\
	&&\quad\quad s.t.~U(x)\in K.\nonumber
\end{eqnarray}
It should be mentioned that the cost function \eqref{objfun} is different with the one proposed in \cite{choy2012robust}. Since there is only one regularization term, we can avoid complicated computation of the total variation of aerial image. Besides, there are less parameters needed to be adjusted, so the corresponding algorithm is easy to implement.

\subsection{Stability analysis}
The existence of solution to problem \eqref{objfun} can be proved similarly as in \cite{choy2012robust} where more regularization terms are considered. So we omit the details here. To consider the stability of the solution, we rewrite the objective function as 
$$
E(U;H)=||Sig_a(|H*U|^2)-I||^2_{L^2(\Omega)}+\beta_1||U||_{TV}+\beta_2||U(1-U)||_{L^1(\Omega)}.
$$
The following result shows the stability of solution.
\begin{theorem}
	Assume that $I \in L^\infty (\Omega)$. $H_n \rightarrow H$ in $L^2(\Omega)$  as $n\rightarrow \infty$, $U_n$ is the minimizer of $E(U;H_n)$. Then there exists a subsequence of $\{U_n\}$ which converges to a minimizer of $E(U,H)$ strongly in $L^1(\Omega)$.
\end{theorem}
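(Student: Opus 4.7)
The plan is a standard ``compactness + lower semi-continuity'' argument from regularization theory, adapted to handle the convolution--square--sigmoid composition.

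First I would derive a uniform BV bound on the sequence $\{U_n\}$. Since $U_n \in K$ we already have $\|U_n\|_{L^1(\Omega)} \le |\Omega|$, so only the TV part needs work. The minimality of $U_n$ for $E(\,\cdot\,;H_n)$ gives $E(U_n;H_n) \le E(0;H_n)$, and because the convolution vanishes at $U\equiv 0$, $E(0;H_n)=\|Sig_a(0)-I\|_{L^2(\Omega)}^2$ is a constant independent of $n$ (using $I\in L^\infty(\Omega)$). Dropping the non-negative misfit and penalty terms yields $\beta_1 \|U_n\|_{TV} \le C$, hence $\|U_n\|_{BV}$ is uniformly bounded. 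The compact embedding $BV(\Omega)\hookrightarrow L^1(\Omega)$ then produces a subsequence (still denoted $U_n$) and a limit $U^*\in BV(\Omega)$ with $U_n \to U^*$ in $L^1(\Omega)$ and, passing to a further subsequence, a.e.\ pointwise. Since $0\le U_n\le 1$, the limit satisfies $U^*\in K$.

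The key step is showing continuity of the misfit term along the subsequence. I would first argue convolution continuity by the splitting
\begin{equation*}
\|H_n*U_n - H*U^*\|_{L^2} \le \|(H_n-H)*U^*\|_{L^2} + \|H_n*(U_n-U^*)\|_{L^2},
\end{equation*}
and bound each piece by Young's inequality: $\|(H_n-H)*U^*\|_{L^2}\le \|H_n-H\|_{L^2}\|U^*\|_{L^1}$ and $\|H_n*(U_n-U^*)\|_{L^2}\le \|H_n\|_{L^2}\|U_n-U^*\|_{L^1}$, both of which vanish as $n\to\infty$ since $\|H_n\|_{L^2}$ is bounded. From $H_n*U_n \to H*U^*$ in $L^2$ it follows that $|H_n*U_n|^2\to|H*U^*|^2$ in $L^1$ (factor the difference of squares and apply Cauchy--Schwarz using boundedness of $\|H_n*U_n+H*U^*\|_{L^2}$). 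Because $Sig_a$ is Lipschitz and maps into $[0,1]$, one then obtains $Sig_a(|H_n*U_n|^2)\to Sig_a(|H*U^*|^2)$ in $L^1$, and the uniform $L^\infty$ bound upgrades this to $L^2$ convergence by dominated convergence. Thus the misfit term converges, and the same estimate with $U_n$ replaced by a fixed $V\in K$ shows $E(V;H_n)\to E(V;H)$.

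To finish, I would handle the remaining terms of $E$: the penalty $\|U_n(1-U_n)\|_{L^1}$ converges to $\|U^*(1-U^*)\|_{L^1}$ by bounded convergence on the a.e.\ subsubsequence, while the total-variation term is lower semicontinuous with respect to $L^1$ convergence, giving $\|U^*\|_{TV}\le \liminf \|U_n\|_{TV}$. Combining these yields $E(U^*;H)\le \liminf E(U_n;H_n)$. For any competitor $V\in K$, minimality gives $E(U_n;H_n)\le E(V;H_n)$, and passing to the limit using the already established continuity $E(V;H_n)\to E(V;H)$ yields $E(U^*;H)\le E(V;H)$, so $U^*$ is a minimizer of $E(\,\cdot\,;H)$.

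The main obstacle is the nonlinear composition in the misfit term: the limit must pass through the convolution against a varying kernel, the squaring, and the sigmoid. The delicate link is the first one, where only the weak regularity $H_n\to H$ in $L^2$ and $U_n\to U^*$ in $L^1$ are available; Young's inequality together with the $L^\infty$ bound $0\le U_n\le 1$ is what makes the estimate close, and the Lipschitz continuity of $Sig_a$ lets the subsequent nonlinearities ride along without difficulty.
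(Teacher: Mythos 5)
Your proof is correct, and its overall architecture (uniform $BV$ bound, compact embedding $BV(\Omega)\hookrightarrow L^1(\Omega)$, lower semicontinuity, comparison against competitors) matches the paper's. The one genuinely different ingredient is how you pass to the limit in the misfit term. The paper first shows the uniform-in-$x$ bound $\bigl||H*U|^2(x)-|H_n*U|^2(x)\bigr|\le C\|H-H_n\|_{L^2(\Omega)}$ via Cauchy--Schwarz, then extracts further subsequences with $U_n\to U^*$ and $H_n\to H$ almost everywhere and invokes Fatou's lemma, which yields only the one-sided inequality $\liminf_n$ for the misfit and penalty terms --- but that is all the argument needs. You instead prove genuine norm convergence: Young's inequality gives $H_n*U_n\to H*U^*$ in $L^2$, the difference-of-squares factorization gives $|H_n*U_n|^2\to|H*U^*|^2$ in $L^1$, and the Lipschitz bound on $Sig_a$ together with its range in $[0,1]$ upgrades this to $L^2$ convergence of the images. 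This buys you two things: you avoid the detour through a.e.-convergent sub-subsequences of both $\{U_n\}$ and $\{H_n\}$, and you obtain actual convergence $E(U_n;H_n)-\beta_1\|U_n\|_{TV}\to E(U^*;H)-\beta_1\|U^*\|_{TV}$ rather than a liminf estimate, so only the TV term requires lower semicontinuity. Your choice of the explicit competitor $U\equiv 0$ for the a priori bound is also a small simplification over the paper, which bounds $E(U_n;H_n)\le E(U;H_n)$ for generic $U$ and appeals to the previously established convergence $E(U;H_n)\to E(U;H)$. Both routes are sound; yours is slightly more self-contained at that step.
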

\begin{proof}
    It is easy to find that
    \begin{eqnarray*}
&&		|H*U|^2(x)-|H_n*U|^2(x)\\
&&\quad		=\int_\Omega (H(y)-H_n(y))U(x-y)dy \cdot \int_\Omega \bar{H}(y)U(x-y)dy\\
&&\quad		+\int_\Omega H_n(y)U(x-y)dy \cdot \int_\Omega (\bar{H}(y)-\bar{H_n}(y))U(x-y)dy.
	\end{eqnarray*}
	Since $H_n\rightarrow H$ in $L^2(\Omega)$, there exists a constant $C$, such that $\|H_n\|_{L^2(\Omega)}\leq C,\|H_n-H\|_{L^2(\Omega)}\leq C$. 
    Be aware of that $0\leq U\leq 1$, we have
    \begin{equation}\label{hu}
		| |H*U|^2(x)-|H_n*U|^2(x) | \le C||H-H_n||_{L^2(\Omega)}, \forall x \in \Omega.
	\end{equation}
	By the definition of $E(U,H)$, we have,
	\begin{eqnarray*}
&&		|E(U;H)-E(U;H_n)|\\
&&\quad		= \int_\Omega |Sig_a(|H*U|^2)-Sig_a(|H_n*U|^2)|\\
&&\quad		\le C ||H-H_n||_{L^2(\Omega)}.
	\end{eqnarray*}
    The last inequality follows from the Lipschitz property of $Sig_a$ and \eqref{hu}, so for any $U\in K$,
	\begin{equation}\label{lim}
		\lim_{n\rightarrow\infty} E(U;H_n)=E(U;H), 
	\end{equation}
	and $E(U;H_n)$ is bounded. Then still by the definition of $E(U,H)$, we have 	
 \begin{equation*}
	    ||U_n||_{TV} \le E(U_n,H_n) \le E(U,H_n),
	\end{equation*}
	and $||U_n||_{TV}$ is bounded.
	From $0\leq U\leq 1$, 	
 \begin{equation*}
		||U_n||_{L^1(\Omega)}=\int_\Omega |U_n| dx \le |\Omega|,
	\end{equation*}
	We can conclude that $\{U_n\}$ is uniformly bounded in $BV(\Omega)$. Since $BV(\Omega)$ is compactly embedded in $L^1(\Omega)$, there exist a subsequence (also denoted by $\{U_n\}$) and a function $U^* \in L^1(\Omega)$ such that	
 \begin{equation}\label{L1}
		U_n\rightarrow U^* \quad strongly \quad in \quad L^1(\Omega).
	\end{equation}
	From Corollary 2.17 of \cite{adams2003sobolev}, there's a subsequence(also denoted by $\{U_n\}$), s.t. 
	
 \begin{equation*}
		U_n\rightarrow U^* \quad a.e.\quad x \in \Omega.
	\end{equation*}
	and a subsequence of $\{H_n\}$ (also denoted by $\{H_n\}$), s.t.	
 \begin{equation*}
		H_n \rightarrow H \quad a.e. \quad x\in \Omega.
	\end{equation*}
	Then	
 \begin{eqnarray*}
		Sig_a(|H_n*U_n|^2)-I  \rightarrow Sig_a(|H*U^*|^2)-I \quad a.e.\quad x \in \Omega\\
		U_n(1-U_n) \rightarrow U^*(1-U^*)\quad a.e.\quad x \in \Omega
	\end{eqnarray*}
	By Fatou’s lemma, we have	
 \begin{eqnarray}\label{pa01}
		&&||Sig_a(|H*U^*|^2)-I||^2_{L^2(\Omega)}+\beta_2||U^*(1-U^*)||_{L^1(\Omega)}\nonumber\\
		&&\quad\le\liminf_{n\rightarrow\infty}||Sig_a(|H_n*U_n|^2)-I||_{L^2(\Omega)}^2+\beta_2||U_n(1-U_n)||_{L^1(\Omega)}.
	\end{eqnarray}
	From \eqref{L1}, we know that \cite{chen1999augmented}	
 \begin{equation}\label{pa02}
		\int_\Omega |DU^*|dx \le \liminf_n \int_\Omega |DU_n|dx.
	\end{equation}
	By \eqref{pa01} and \eqref{pa02}, we have	
 \begin{equation*}
		E(U^*;H)\le \liminf_n E(U_n,H_n).
	\end{equation*}
	From the definition of $U_n$, 	
 \begin{equation*}
		E(U_n;H_n)\le E(U;H_n), \forall U\in K.
	\end{equation*}
	Then	
 \begin{equation*}
		E(U^*;H)\le\liminf_nE(U_n,H_n)\le \liminf_nE(U,H_n)=E(U;H), \forall U\in K,
	\end{equation*}
	hence $U^*$ is a minimizer of $E(U ; H)$.
\end{proof}
We remark that the above result shows the stability with respect to the convolution kernel $H$. Actually, as we have mentioned before, $H$ represents the point spread function of the photo-lithography system which is very complex and many factors can affect $H$. So this stability is important in reality. 
\section{The ADMM method for the inverse lithography problem}\label{sect3}
In this section, we will give an ADMM framework to solve the inverse lithography problem and analyze the convergence, then propose an efficient ADMM method with threshold truncation. 
%\subsection{The ADMM  framework}\label{sect31}

In the beginning, we give some notations as given in \cite{ma2011computational}. Let the mask domain $\Omega$ be rectangle. We discrete the mask into an $n \times n$ Cartesian grid and let the mask be valued on grid points. In this case, the mask is pixel-based and can be represented by a matrix $U \in R^{n\times n}$. For convenient, we use the vector representation of the matrix $U$ and also denoted $U\in R^{n^2}$. Similarly, we use $I\in R^{n^2}$ represent the desired binary output pattern. A convolution matrix $H \in R^{n^2 \times n^2}$ represent the discrete form of the two dimensional point spread function. Let $D_x, D_y \in R^{n^2} \times R^{n^2}$ are first-order finite difference matrices in the horizontal and vertical directions, respectively.
%, defined as
%\begin{equation}
%	\begin{split}	
	%	D_x&=E-S_x,\\
	%	D_y&=E-S_y,
	%	\end{split}
%\end{equation}
%where $E$ is the $n^2-$dimensional identity matrix, $S_x$ and $S_y$ are matrices which can shift a $n \times n$ matrix along horizontal (right) and vertical (up) direction by one pixel, respectively. 
Let
\begin{equation*}
	DU=(D_x U,D_y U).
\end{equation*}
With these notations, we define 
\begin{equation*}
	||U||_{TV}=\|DU\|_1=||D_xU||_1+||D_yU||_1.
\end{equation*}
By variable-splitting, problem \eqref{objfun} can be recast to
\begin{eqnarray}\label{relax}
	&&\min_{U,V}||Sig_a(|V|^2)-I||_2^2+\beta_1 ||DU||_1+\beta_2||U\odot(1-U)||_1\\
	&& s.t.\quad 0\leq U \leq 1,V=HU.\nonumber
\end{eqnarray}
Here '$\odot$' denotes the element-wise product of matrices.
Then the augmented lagrangian for problem (\ref{relax}) is:
\begin{eqnarray}\label{lag}
	&&L_a(U,V,P) = ||Sig_a(|V|^2)-I||_2^2+\beta_1 ||DU||_1+\beta_2||U\odot(1-U)||_1\nonumber\\
	&&\quad\quad+<P,V-HU>+\frac{\rho}{2}||V-HU||_2^2,
\end{eqnarray}
where $\rho$ is a sufficient large penalty parameter. With this Lagrangian, we have the following alternating direction method of multipliers for given initial $U^0, V^0, P^0$,
\begin{eqnarray}\label{admm}
\left\{\begin{array}{l}
	U^{k+1}=argmin_{U} L_a(U,V^k,P^k),\\
	V^{k+1}=argmin_{V} L_a(U^{k+1},V,P^k),\\
	P^{k+1}=P^{k}+\rho(V^{k+1}-HU^{k+1}).
	\end{array}\right.
\end{eqnarray}
\subsection{Convergence analysis}
The convergence analysis of \eqref{admm} is inspired by the global convergence result of \cite{wang2019global}. In order to make the proof more concise, we first give some notations. We define the following functions

\begin{equation*}
	h_a(V)=||Sig_a(|V|^2)-I||_2^2,
\end{equation*}
then the derivative with respect to $V$
\begin{equation*}
	\nabla h_a(V)=2aV\odot (Sig_a(|V|^2)-I)\odot(1-Sig_a(|V|^2))\odot Sig_a(|V|^2).
\end{equation*}
\begin{figure}[ht]
    \centering
	\includegraphics[scale=0.45]{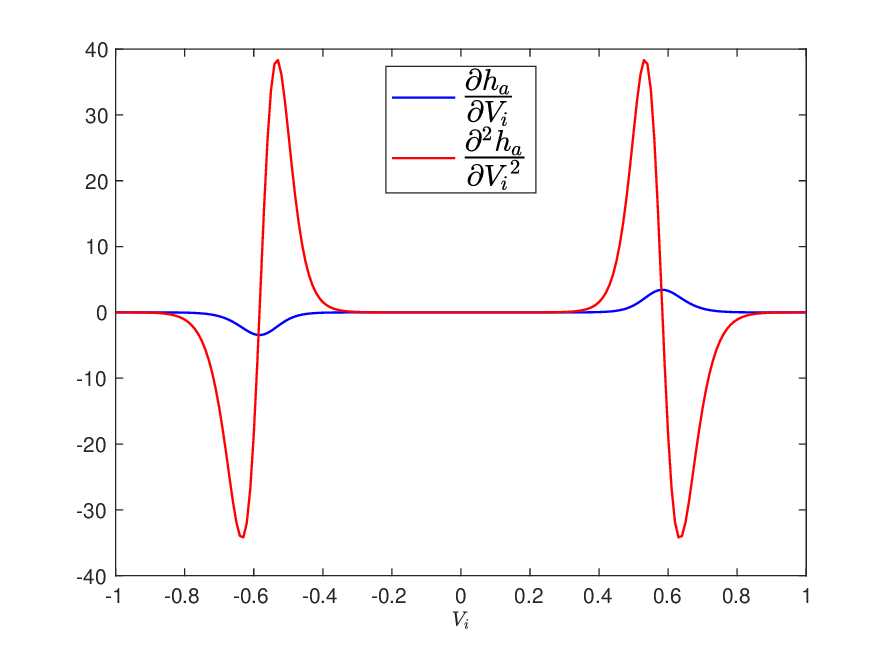}
	\includegraphics[scale=0.45]{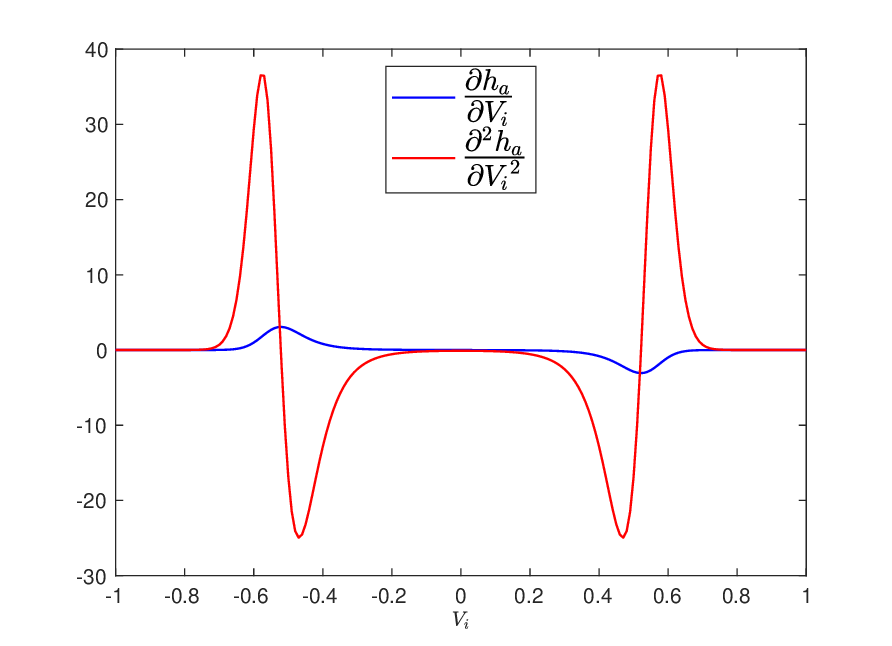}
	\caption{The $i$-th entry of $\nabla h_a(V)$ and $i$-th diagonal entry of Hessian of $h_a(V)$ with $a=20, tr=0.3$. The left is with $I_i=0$ and the right is with $I_i=1$ .}
	\label{ha}
\end{figure}
Clearly, $\nabla h_a(V)$ has the same size with $V$, the Hessian of $h_a(V)$ is a matrix. According to the definition of $h_a(V)$, each entry of $\nabla h_a(V)$ is a single value function, then the hessian of $h_a(V)$ is diagonal. In Figure \ref{ha}, we plot the $i$-th entry of $\nabla h_a(V)$ and the according diagonal entry of Hessian  with $a=20,tr=0.3$. It is easy to know that $\nabla h_a(V)$ is Lipschitz continuous for any given $a$. We suppose the Lipschitz constant is $L_h$. We define 
\begin{equation*}
	f(U)=\beta_1||DU||_1+\beta_2||U\odot(1-U)||_1.
\end{equation*}
In such situation, the Lagrangian function can be recast as
\begin{equation}\label{La}
	L_a(U,V,P) = f(U)+h_a(V)+<P,V-HU>+\frac{\rho}{2}||V-HU||_2^2.
\end{equation}
For convenience, we omit the subscript $a$ in the followings. Then
\begin{eqnarray}\label{approx}
\left\{\begin{array}{l}
	U^{k+1}=argmin_{U} L(U,V^k,P^k),  \\
	V^{k+1}=argmin_{V} L(U^{k+1},V,P^k),  \\
	P^{k+1}=P^{k}+\rho(V^{k+1}-HU^{k+1}).
	\end{array}\right.
\end{eqnarray}
Now we will state and prove the main theorem about the convergence.
\begin{theorem}\label{thm}
	Suppose sequence $(U^k, V^k, P^k)$ is generated by \eqref{approx}, assume that $\frac{\rho}{2}-\frac{L_h}{\rho}-L_h>0$ , then there is a subsequence which converges to a stationary point of $L(U,V,P)$.
\end{theorem}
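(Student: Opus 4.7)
The plan is to follow the nonconvex-ADMM convergence template of Wang--Yin--Zeng cited in the introduction, carried out in four steps: (i) relate the dual increment to the primal increment, (ii) prove sufficient decrease of the augmented Lagrangian, (iii) prove boundedness of iterates and summability of the increments, (iv) extract a limit and pass to the stationarity conditions.

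Step 1 (controlling the multiplier by $V^{k+1}-V^{k}$). The crucial identity comes from the first-order optimality of the $V$-subproblem. Since $V^{k+1}$ is an interior minimizer of $L(U^{k+1},\cdot,P^{k})$, we have
\[
\nabla h(V^{k+1}) + P^{k} + \rho(V^{k+1}-HU^{k+1})=0,
\]
so combining with the dual update $P^{k+1}=P^{k}+\rho(V^{k+1}-HU^{k+1})$ yields $P^{k+1}=-\nabla h(V^{k+1})$. Using the $L_h$-Lipschitz property of $\nabla h$ established just before the theorem,
\[
\|P^{k+1}-P^{k}\|\le L_h\|V^{k+1}-V^{k}\|.
\]

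Step 2 (sufficient decrease). I would split one step as $L(U^{k+1},V^{k+1},P^{k+1})-L(U^{k},V^{k},P^{k})=A_k+B_k+C_k$ corresponding to the $U$-, $V$-, and $P$-updates. The $U$-update is a minimization, so $A_k\le 0$. For $B_k$, the function $V\mapsto L(U^{k+1},V,P^{k})$ has Hessian $\nabla^{2}h(V)+\rho I\succeq(\rho-L_h)I$, hence is $(\rho-L_h)$-strongly convex when $\rho>L_h$, giving
\[
B_k\le -\tfrac{\rho-L_h}{2}\|V^{k+1}-V^{k}\|^{2}.
\]
For $C_k$, a direct computation together with Step 1 gives
\[
C_k=\tfrac{1}{\rho}\|P^{k+1}-P^{k}\|^{2}\le \tfrac{L_h^{2}}{\rho}\|V^{k+1}-V^{k}\|^{2}.
\]
Under the standing assumption one obtains $L(U^{k+1},V^{k+1},P^{k+1})-L(U^{k},V^{k},P^{k})\le -c\|V^{k+1}-V^{k}\|^{2}$ for some $c>0$, i.e.\ a monotone decrease.

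Step 3 (boundedness and summability). Using $P^{k+1}=-\nabla h(V^{k+1})$ one rewrites the Lagrangian along the iterates in a form from which a uniform lower bound is clear: $f\ge 0$, $h\ge 0$, the penalty $\|U\odot(1-U)\|_{1}$ keeps $U^{k}$ in a bounded region, the coercive quadratic together with $\|\nabla h\|$ bounded forces $V^{k}$ and hence $P^{k}$ bounded. Monotonicity and boundedness below then give $\sum_{k}\|V^{k+1}-V^{k}\|^{2}<\infty$, so $V^{k+1}-V^{k}\to 0$, $P^{k+1}-P^{k}\to 0$, and by the dual update $V^{k+1}-HU^{k+1}\to 0$.

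Step 4 (limit is stationary). Bolzano--Weierstrass gives a subsequence $(U^{k_j},V^{k_j},P^{k_j})\to (U^{*},V^{*},P^{*})$. I would then pass to the limit in the three optimality systems: the $V$-optimality $\nabla h(V^{k_j+1})+P^{k_j}+\rho(V^{k_j+1}-HU^{k_j+1})=0$ becomes $\nabla h(V^{*})+P^{*}=0$ with $V^{*}=HU^{*}$; the dual feasibility is $V^{*}=HU^{*}$; the $U$-subproblem optimality is expressed via the limiting subdifferential of $f$ and gives $0\in\partial f(U^{*})-H^{\top}P^{*}+N_{[0,1]}(U^{*})$ after taking limits in the convex TV subdifferential and the Clarke subdifferential of $\|U\odot(1-U)\|_{1}$ (both upper semicontinuous on bounded sets). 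These are exactly the stationarity conditions for $L(U,V,P)$.

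The main obstacle is the sufficient-decrease analysis of the $V$-step, since $h$ is nonconvex and only enjoys $L_h$-Lipschitz gradient: strong convexity of the $V$-subproblem must be recovered from $\rho I+\nabla^{2}h$, and the dual ascent term $\tfrac{L_h^{2}}{\rho}\|V^{k+1}-V^{k}\|^{2}$ has to be strictly dominated, which is precisely the quantitative role of the condition on $\rho$. A secondary but real difficulty is the $U$-step: because $f$ contains the nonconvex piece $\|U\odot(1-U)\|_{1}$ and $H$ may have a nontrivial kernel, one cannot easily extract $U^{k+1}-U^{k}\to 0$ from the $V$-estimate alone; this is why the theorem only asserts subsequential convergence and why the limit passage in Step 4 must be handled with an outer-semicontinuous limiting-subdifferential argument rather than by equating gradients in a one-line computation.
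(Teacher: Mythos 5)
Your proposal is correct and follows essentially the same route as the paper's proof (the Wang--Yin--Zeng nonconvex-ADMM template): the key identity $P^{k+1}=-\nabla h(V^{k+1})$ giving $\|P^{k+1}-P^k\|\le L_h\|V^{k+1}-V^k\|$, a sufficient-decrease estimate in terms of $\|V^{k+1}-V^k\|^2$, lower-boundedness of the augmented Lagrangian and boundedness of the iterates, and passage to the limit in the three optimality systems along a convergent subsequence. The only local differences are that you derive the $V$-step decrease from $(\rho-L_h)$-strong convexity of the $V$-subproblem while the paper expands $L(U^{k+1},V^k,P^k)-L(U^{k+1},V^{k+1},P^{k+1})$ directly and applies the Lipschitz bound on $\nabla h$ to arrive at the constant $\frac{\rho}{2}-\frac{L_h}{\rho}-L_h$ appearing in the hypothesis (your constant is slightly different but is also positive under that hypothesis since it forces $\rho>2L_h$), and that you invoke outer semicontinuity of the limiting subdifferential for the $U$-limit where the paper uses the subgradient inequality together with a separate lemma showing $L(U^{k_s},V^{k_s},P^{k_s})\to L(U^*,V^*,P^*)$.
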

We begin our analysis by showing a sufficient decrease property of the Lagrange function.
\begin{lemma} \label{lemma1}
	If $\frac{\rho}{2}-\frac{L_h}{\rho}-L_h>0$, where $L_h$ is the Lipschitz constant of $\nabla h$, there is a constant $C=C(\rho,L_h) > 0$ such that for sufficiently large k, we have
	\begin{equation}\label{decrease}
		L( U^k,V^k, P^k)-L(U^{k+1},V^{k+1},P^{k+1})\ge C||V^{k+1}-V^{k}||_2^2.
	\end{equation}
\end{lemma}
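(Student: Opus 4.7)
The plan is to apply the standard three-term ADMM descent decomposition
\[
L(U^k,V^k,P^k) - L(U^{k+1},V^{k+1},P^{k+1}) = T_1 + T_2 + T_3,
\]
where $T_1$, $T_2$ track the changes from the primal $U$- and $V$-updates and $T_3$ from the dual update. The $U$-piece satisfies $T_1\ge 0$ immediately because $U^{k+1}$ minimizes $L(\cdot,V^k,P^k)$ over the feasible set $0\le U\le 1$.

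For $T_2$ I would exploit the fact that, viewed as a function of $V$ alone, $L(U^{k+1},V,P^k)$ is the sum of $h(V)$ and a quadratic in $V$ whose Hessian is $\rho I$. Since $\nabla h$ is $L_h$-Lipschitz the Hessian of $h$ is bounded below by $-L_h I$, so $L(U^{k+1},\cdot,P^k)$ is $(\rho-L_h)$-strongly convex as soon as $\rho>L_h$. Because $V^{k+1}$ is its minimizer this yields $T_2\ge\tfrac{\rho-L_h}{2}\|V^{k+1}-V^k\|_2^2$.

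The main obstacle is $T_3=\langle P^k-P^{k+1},V^{k+1}-HU^{k+1}\rangle=-\tfrac{1}{\rho}\|P^{k+1}-P^k\|_2^2$, which is \emph{negative} and must be dominated by $T_2$. The key identity is that combining the $V$-subproblem optimality condition $\nabla h(V^{k+1})+P^k+\rho(V^{k+1}-HU^{k+1})=0$ with the dual update \eqref{approx} gives $P^{k+1}=-\nabla h(V^{k+1})$, and hence for $k\ge 1$ also $P^k=-\nabla h(V^k)$. Lipschitz smoothness of $\nabla h$ then yields
\[
\|P^{k+1}-P^k\|_2^2=\|\nabla h(V^{k+1})-\nabla h(V^k)\|_2^2\le L_h^2\,\|V^{k+1}-V^k\|_2^2,
\]
so $T_3\ge-\tfrac{L_h^2}{\rho}\|V^{k+1}-V^k\|_2^2$.

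Summing the three bounds produces \eqref{decrease} with constant $C=\tfrac{\rho-L_h}{2}-\tfrac{L_h^2}{\rho}$, which is positive under the stated hypothesis relating $\rho$ and $L_h$; the ``sufficiently large $k$'' proviso is precisely what guarantees $P^k=-\nabla h(V^k)$ from the previous $V$-subproblem. Identifying $P^{k+1}$ with $-\nabla h(V^{k+1})$ is the only genuinely delicate step in the plan: without it, one is forced to bound the multiplier increment by the primal residual $V^{k+1}-HU^{k+1}$, which is not obviously absorbed by the strongly convex margin coming from $T_2$.
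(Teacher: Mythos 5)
Your proposal is correct and follows essentially the same route as the paper: the same three-term decomposition (with the $U$-step contribution nonnegative by optimality), the same key identity $P^{k}=-\nabla h(V^{k})$ obtained by combining the $V$-subproblem optimality condition with the dual update, and the same Lipschitz bound $\|P^{k+1}-P^k\|_2\le L_h\|V^{k+1}-V^k\|_2$ to absorb the negative dual term $-\tfrac{1}{\rho}\|P^{k+1}-P^k\|_2^2$. The only (minor) difference is that you bound the $V$-step decrease via $(\rho-L_h)$-strong convexity of $L(U^{k+1},\cdot,P^k)$, whereas the paper expands the difference directly and uses the estimate $h(V^k)-h(V^{k+1})-\langle\nabla h(V^{k+1}),V^k-V^{k+1}\rangle\ge -L_h\|V^k-V^{k+1}\|_2^2$ together with the explicit $\tfrac{\rho}{2}\|V^{k+1}-V^k\|_2^2$ term; your constant $\tfrac{\rho-L_h}{2}-\tfrac{L_h^2}{\rho}$ is indeed positive under the stated hypothesis (which forces $\rho>2L_h$), and your exponent $L_h^2/\rho$ on the dual term is in fact the correct one where the paper writes $L_h/\rho$.
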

\begin{proof}
	With the optimality condition of $V^{k}$	
 \begin{equation*}
	    \nabla h(V^k)+P^{k-1}+\rho (V^k-HU^k)=0,
	\end{equation*}
	and               	
 \begin{equation}
		P^k = P^{k-1} +\rho(V^k-HU^k),\label{Pk}
	\end{equation}
	we have
\begin{equation}
		P^k = -\nabla h(V^k).\label{Pk1}
	\end{equation}
	Then by the Lipschitz continuity of $\nabla h$,
\begin{equation*}
		||P^{k+1}-P^k||_2 = ||\nabla h(V^k)-\nabla h(V^{k+1})||_2 \le L_h||V^k-V^{k+1}||_2.
	\end{equation*}
	With the definition of $U^{k+1}$, it follows that
\begin{equation}
		L(U^k,V^k,P^k)-L(U^{k+1},V^k,P^k)\ge 0,\label{de1}
	\end{equation}
	and
\begin{eqnarray*}
	&&L(U^{k+1}, V^k,P^k)-L(U^{k+1},V^{k+1},P^{k+1})\\
		&&\quad=h(V^k)-h(V^{k+1})-<P^{k+1},V^{k+1}-HU^{k+1}>\\
		&&\quad+<P^{k},V^{k}-HU^{k+1}>+\frac{\rho}{2}||V^{k}-HU^{k+1}||_2^2-\frac{\rho}{2}||V^{k+1}-HU^{k+1}||_2^2\\
		&&\quad=h(V^k)-h(V^{k+1})+<P^{k}-P^{k+1},V^{k}
		-HU^{k+1}>\\
		&&\quad-<P^{k+1},V^{k+1}-V^k>+\frac{\rho}{2}||V^{k}-V^{k+1}||_2^2\\
		&&\quad+\rho<V^k-V^{k+1},V^{k+1}-HU^{k+1}>\\
		&&\quad=h(V^k)-h(V^{k+1})+<P^{k+1},V^{k}-V^{k+1}>+\frac{\rho}{2}||V^{k}-V^{k+1}||_2^2\\
		&&\quad+<P^{k}-P^{k+1},V^{k}-V^{k+1}>+<P^{k}-P^{k+1},V^{k+1}-HU^{k+1}>\\
		&&\quad+\rho<V^k-V^{k+1},V^{k+1}-HU^{k+1}>
		\end{eqnarray*}
	Then combining \eqref{Pk} and \eqref{Pk1} and using Lipschitz property of $\nabla h(V)$,  we have
\begin{eqnarray*}
	&& L(U^{k+1}, V^k,P^k)-L(U^{k+1},V^{k+1},P^{k+1})\\
		&&\quad=h(V^k)-h(V^{k+1})-<\nabla h(V^{k+1}),V^{k}-V^{k+1}>\\
		&&\quad+\frac{\rho}{2}||V^{k+1}-V^{k}||_2^2-\frac{1}{\rho}||P^{k+1}-P^k||_2^2\\
		&&\quad\geq-L_h(||V^{k+1}-V^k||_2^2)+\frac{\rho}{2}||V^{k+1}-V^{k}||_2^2-\frac{L_h}{\rho}||V^{k+1}-V^{k}||_2^2\\
		&&\quad\geq(\frac{\rho}{2}-\frac{L_h}{\rho}-L_h)(||V^{k+1}-V^k||_2^2).
	\end{eqnarray*}
	Finally, with the help of \eqref{de1} and the above inequality, we obtain the result (\ref{decrease}).
\end{proof}
The next lemma tells us that the Lagrangian is bounded below and the sequence $(U^k, V^k, V^k)$ is bounded.
\begin{lemma} \label{bound}
	Under the same condition as Lemma \ref{lemma1},  $L(U^k,V^k,P^k)$ is lower bounded and $(U^k,V^k,P^k)$ is bounded.
\end{lemma}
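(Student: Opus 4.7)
The plan is to use the identity $P^k=-\nabla h(V^k)$ established inside the proof of Lemma \ref{lemma1} (see \eqref{Pk1}) to eliminate the multiplier, and then apply the standard descent lemma for the $L_h$-smooth function $h$ to replace the remaining $h$-terms by the non-negative quantity $h(HU^k)$. What is left is a sum of three non-negative terms, giving the lower bound, and boundedness of the iterates then follows from combining this lower bound with the monotone decrease already furnished by Lemma \ref{lemma1}.

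Concretely, I would first substitute $\langle P^k, V^k-HU^k\rangle = -\langle \nabla h(V^k), V^k-HU^k\rangle$ into the definition \eqref{La} of the Lagrangian. Then by $L_h$-Lipschitz continuity of $\nabla h$, the descent inequality
\begin{equation*}
h(HU^k)\le h(V^k)+\langle \nabla h(V^k),HU^k-V^k\rangle+\tfrac{L_h}{2}\|V^k-HU^k\|_2^2
\end{equation*}
rearranges to $h(V^k)-\langle \nabla h(V^k),V^k-HU^k\rangle\ge h(HU^k)-\tfrac{L_h}{2}\|V^k-HU^k\|_2^2$. Plugging this in yields
\begin{equation*}
L(U^k,V^k,P^k)\ge f(U^k)+h(HU^k)+\tfrac{\rho-L_h}{2}\|V^k-HU^k\|_2^2.
\end{equation*}
The assumption $\tfrac{\rho}{2}-\tfrac{L_h}{\rho}-L_h>0$ forces $\rho>L_h$, so every summand on the right is non-negative; in particular $L(U^k,V^k,P^k)\ge 0$, which gives the lower boundedness.

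For the boundedness of the sequence, I would combine the above with Lemma \ref{lemma1}, which shows that $\{L(U^k,V^k,P^k)\}$ is non-increasing and hence bounded above by $L^0:=L(U^0,V^0,P^0)$. From the displayed inequality this immediately produces the uniform bounds $f(U^k)\le L^0$ and $\|V^k-HU^k\|_2^2\le 2L^0/(\rho-L_h)$. Since the iterate $U^k$ stays in the feasible set $K=\{0\le U\le 1\}$ enforced in the $U$-subproblem, $\{U^k\}$ is trivially bounded; then $HU^k$ is bounded, so $V^k=(V^k-HU^k)+HU^k$ is bounded, and finally $P^k=-\nabla h(V^k)$ is bounded by continuity of $\nabla h$ on a bounded set.

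The main subtlety is the use of the identity $P^k=-\nabla h(V^k)$: it relies on the optimality condition for the $V$-subproblem, which requires $k\ge 1$, matching the ``for sufficiently large $k$'' language in Lemma \ref{lemma1}. A secondary point to double-check is that $\rho-L_h>0$ follows from the stronger quadratic inequality $\tfrac{\rho}{2}-\tfrac{L_h}{\rho}-L_h>0$; solving this for $\rho$ shows $\rho>L_h+\sqrt{L_h^2+2L_h}>L_h$, so the coefficient in the final display is indeed strictly positive and the argument closes.
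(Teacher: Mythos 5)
Your proposal is correct and follows essentially the same route as the paper: it uses $P^k=-\nabla h(V^k)$ to eliminate the multiplier, applies the descent lemma for the $L_h$-smooth function $h$ at the point $HU^k$ to obtain $L\ge f(U^k)+h(HU^k)+c\,\|V^k-HU^k\|_2^2$ with $c>0$, and then derives boundedness of $U^k$, $V^k$, $P^k$ in exactly the same chain as the paper. The only cosmetic difference is your coefficient $\tfrac{\rho-L_h}{2}$ versus the paper's slightly looser $\tfrac{\rho}{2}-L_h$, which does not change the argument.
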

\begin{proof}
Since $\rho>2L_h$, by the definition of $L(U,V,P)$, we have	
 \begin{eqnarray*}
		&& L(U^k,V^k,P^k)=f(U^k)+h(V^k)+<P^k,V^k-HU^k>+\frac{\rho}{2}||V^k-HU^k||_2^2\\
		&&\quad=f(U^k)+h(V^k)+<P^k,V^k-V'>+\frac{\rho}{2}||V^k-HU^k||_2^2\\
		&&\quad=f(U^k)+h(V^k)-<\nabla
		h(V^k),V^k-V'>+\frac{\rho}{2}||V^k-HU^k||_2^2\\
		&&\quad\ge f(U^k)+h(V')+(\frac{\rho}{2}-L_h)||V^k-HU^k||_2^2\\
		&&\quad\ge \frac{L_h}{\rho}||V^k-HU^k||_2^2\ge 0.
	\end{eqnarray*}
	Lemma \ref{lemma1} tells us that  $L(U^k,V^k,P^k)$ is upper bounded, then $||V^k-HU^k||$ is bounded. We know that $0\le {U^k} \le 1$, then ${HU^k}$ is bounded. So $V^k$ is bounded and $P^k = -\nabla h(V^k)$ is bounded, too.
\end{proof}
\begin{lemma}\label{VP}
The sequence $L(U^k,V^k,P^k)$ is convergent and 
$$\lim_{k\rightarrow\infty}||V^{k}-V^{k+1}||_2\rightarrow 0,\lim_{k\rightarrow\infty}||P^{k}-P^{k+1}||_2\rightarrow 0.$$
\end{lemma}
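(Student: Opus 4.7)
The plan is to combine the monotone–decrease inequality of Lemma \ref{lemma1} with the lower bound from Lemma \ref{bound}, then telescope.

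First, I would observe that Lemma \ref{lemma1} gives, for sufficiently large $k$,
\[
L(U^k,V^k,P^k) - L(U^{k+1},V^{k+1},P^{k+1}) \geq C\|V^{k+1}-V^k\|_2^2 \geq 0,
\]
so eventually the sequence $\{L(U^k,V^k,P^k)\}$ is monotonically non-increasing. Combined with the lower bound established in Lemma \ref{bound}, the monotone convergence theorem for real sequences immediately yields convergence of $\{L(U^k,V^k,P^k)\}$ to some finite limit $L^*$.

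Next, I would telescope the inequality from Lemma \ref{lemma1}. Summing from some $k_0$ to $N$ gives
\[
C\sum_{k=k_0}^{N}\|V^{k+1}-V^k\|_2^2 \leq L(U^{k_0},V^{k_0},P^{k_0}) - L(U^{N+1},V^{N+1},P^{N+1}),
\]
and letting $N\to\infty$ shows $\sum_{k}\|V^{k+1}-V^k\|_2^2 < \infty$, which forces $\|V^{k+1}-V^k\|_2 \to 0$.

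Finally, using the identity $P^k = -\nabla h(V^k)$ derived in the proof of Lemma \ref{lemma1} together with the Lipschitz continuity of $\nabla h$, we have
\[
\|P^{k+1}-P^k\|_2 = \|\nabla h(V^{k+1})-\nabla h(V^k)\|_2 \leq L_h\|V^{k+1}-V^k\|_2,
\]
so $\|P^{k+1}-P^k\|_2 \to 0$ is an immediate consequence of the previous step. There is no real obstacle here; the lemma is essentially a bookkeeping consequence of Lemmas \ref{lemma1} and \ref{bound}, and the only subtlety is to note that the inequality in Lemma \ref{lemma1} holds for $k$ sufficiently large, which does not affect either the monotone-convergence argument or the summability conclusion.
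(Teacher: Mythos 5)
Your proposal is correct and follows essentially the same route as the paper: monotone decrease (Lemma \ref{lemma1}) plus the lower bound (Lemma \ref{bound}) give convergence of $L$, the decrease inequality then forces $\|V^{k+1}-V^k\|_2\to 0$, and the identity $P^k=-\nabla h(V^k)$ with Lipschitz continuity of $\nabla h$ transfers this to $\|P^{k+1}-P^k\|_2\to 0$. The only cosmetic difference is that you telescope to get summability of $\sum_k\|V^{k+1}-V^k\|_2^2$, whereas the paper deduces the vanishing of the increments directly from the convergence of $L$; both are valid.
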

\begin{proof}
Based on Lemma \ref{lemma1} and Lemma \ref{bound}, we know that the sequence 
$\{L(U^k,V^k,P^k)\}$ is  convergent, then by Lemma \ref{lemma1}, 
we have $||V^{k}-V^{k+1}||_2\rightarrow 0(k \rightarrow 0)$.
Since $P^k=-\nabla h(V^k)$, we have 
$$P^k-P^{k+1}=-\nabla h(V^k)+\nabla h(V^{k+1}).$$
Because $\nabla h(V)$ is Lipschitz continuous,  we can get 
$$\lim_{k\rightarrow\infty}||P^{k}-P^{k+1}||_2\rightarrow 0.$$
\end{proof}
 We now establish the following two convergence results:
\begin{lemma}\label{d}
	For any k, there is a $\bar{d}^k \in  \partial_U L(U^{k+1},V^{k+1},P^{k+1})$ such that 	
 \begin{equation*}
		||\bar{d}^k||_2 \rightarrow 0 \mbox{ as } k\rightarrow \infty.
	\end{equation*}
\end{lemma}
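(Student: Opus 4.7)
The plan is to exploit the first-order optimality condition of the $U$-subproblem in the ADMM iteration \eqref{approx}, and then reuse the \emph{same} subgradient of $f$ at $U^{k+1}$ to construct an element of $\partial_U L(U^{k+1}, V^{k+1}, P^{k+1})$ whose norm is controlled by the successive differences that Lemma~\ref{VP} has already shown to vanish.

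First, since the inner product and the quadratic penalty in the Lagrangian \eqref{La} are smooth in $U$, the partial subdifferential splits by the sum rule as
\begin{equation*}
\partial_U L(U, V, P) = \partial f(U) - H^T P - \rho H^T(V - HU).
\end{equation*}
Optimality of $U^{k+1}$ in the first line of \eqref{approx} then provides a subgradient $d^k \in \partial f(U^{k+1})$ satisfying $d^k = H^T P^k + \rho H^T(V^k - H U^{k+1})$.

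Next, I would use this same $d^k$ to form
\begin{equation*}
\bar{d}^k := d^k - H^T P^{k+1} - \rho H^T(V^{k+1} - H U^{k+1}),
\end{equation*}
which by construction lies in $\partial_U L(U^{k+1}, V^{k+1}, P^{k+1})$. Substituting the formula for $d^k$ yields the telescoping identity
\begin{equation*}
\bar{d}^k = H^T(P^k - P^{k+1}) + \rho H^T(V^k - V^{k+1}),
\end{equation*}
so $\|\bar{d}^k\|_2 \le \|H\|_2\bigl(\|P^k - P^{k+1}\|_2 + \rho\,\|V^k - V^{k+1}\|_2\bigr)$. Applying Lemma~\ref{VP} finishes the argument, since both differences tend to zero as $k \to \infty$.

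The only delicate point, and what I would expect to be the main obstacle, is that $f$ is non-convex: the term $\beta_2\|U \odot (1-U)\|_1$ reduces on $[0,1]$ to $\sum_i U_i(1-U_i)$, a smooth concave function, and one should also fold the box constraint $0 \le U \le 1$ into $f$ via an indicator function. Consequently $\partial f$ must be read as the limiting (Mordukhovich) subdifferential. The sum-rule step used above is nonetheless valid because the non-TV pieces are either continuously differentiable or indicators of convex sets, so no qualification condition beyond what the problem already satisfies is needed.
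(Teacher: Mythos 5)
Your proof is correct and follows essentially the same route as the paper: the same sum-rule splitting of $\partial_U L$, the same subgradient $d^k$ extracted from the optimality condition of the $U$-subproblem, and the same telescoping identity $\bar{d}^k = H^T(P^k-P^{k+1}) + \rho H^T(V^k-V^{k+1})$, concluded via the vanishing successive differences (the paper routes the $P$-difference through the Lipschitz bound $\|P^k-P^{k+1}\|_2 \le L_h\|V^k-V^{k+1}\|_2$ rather than citing Lemma~\ref{VP} for it directly, which is an immaterial difference). Your closing remark on reading $\partial f$ as the limiting subdifferential because of the concave penalty term is a sound clarification of a point the paper leaves implicit.
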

\begin{proof}
	By \eqref{La}, we have	
 \begin{equation*}
		\partial_U L(U^{k+1},V^{k+1},P^{k+1})=\partial f(U^{k+1})-H^T P^{k+1}-\rho H^T(V^{k+1}-HU^{k+1}).
	\end{equation*}
	From the optimality condition of $U^{k+1}$	
 \begin{equation*}
		\partial f(U^{k+1}) -H^T P^k -\rho H^T(V^{k} -HU^{k+1}) =0.
	\end{equation*}
	Let	
 \begin{equation*}
		d^k=H^T P^k +\rho H^T(V^{k} -HU^{k+1})\in \partial f(U^{k+1}). 
	\end{equation*}
	Then we have	
 \begin{eqnarray*}
		&&\bar{d}^k=d^k-H^T P^{k+1}-\rho H^T(V^{k+1}-HU^{k+1})\\
		&&\quad=H^T(P^{k}-P^{k+1})+\rho H^T(V^{k}-V^{k+1})
		\in \partial_U L(U^{k+1},V^{k+1},P^{k+1}),
	\end{eqnarray*}
	and by virtual of Lemma \eqref{lemma1} and \eqref{bound}, 	
 \begin{equation*}
		||\bar{d}^k||_2 \le C'(||P^{k}-P^{k+1}||_2+||V^{k}-V^{k+1}||_2)\le C||V^{k}-V^{k+1}||_2 \rightarrow 0.
	\end{equation*}
\end{proof}
\begin{lemma}\label{limit}
	If $(U^*, V^*, P^*)$ is the limit point of a subsequence $(U^{k_s}, V^{k_s}, P^{k_s})$, then
	$L(U^*, V^*, P^*) = \lim_{s\rightarrow \infty} L(U^{k_s}, V^{k_s}, P^{k_s})$.
\end{lemma}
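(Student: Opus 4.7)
The plan is to show that $L$ is continuous in $(U,V,P)$ on the ambient finite-dimensional space, so that convergence along the subsequence passes inside $L$ term by term. Writing
\[
L(U,V,P)=f(U)+h(V)+\langle P,V-HU\rangle+\frac{\rho}{2}\|V-HU\|_2^2,
\]
I would verify continuity of each summand separately and then combine.

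For the smooth pieces, $h(V)=\|Sig_a(|V|^2)-I\|_2^2$ is the composition of $V\mapsto|V|^2$, the sigmoid, a translation by $I$, and the squared $\ell_2$-norm, each of which is $C^1$ (a fact already exploited via the Lipschitz constant $L_h$ in Lemmas \ref{lemma1} and \ref{bound}); and the coupling pair $\langle P,V-HU\rangle$ and $\frac{\rho}{2}\|V-HU\|_2^2$ are polynomials in the coordinates of $(U,V,P)$, hence continuous. For the regularizer, $f(U)=\beta_1\|DU\|_1+\beta_2\|U\odot(1-U)\|_1$ is continuous since $D$ is a fixed linear map, $\|\cdot\|_1$ is a (Lipschitz) norm on $\mathbb{R}^{n^2}$, and $U\mapsto U\odot(1-U)$ is polynomial. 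The only mild point to flag is that the TV-term $\|DU\|_1$ is non-smooth, but here only continuity, not differentiability, is required.

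Combining the pieces, $L$ is continuous on $\mathbb{R}^{n^2}\times\mathbb{R}^{n^2}\times\mathbb{R}^{n^2}$, so $(U^{k_s},V^{k_s},P^{k_s})\to(U^*,V^*,P^*)$ immediately yields $L(U^{k_s},V^{k_s},P^{k_s})\to L(U^*,V^*,P^*)$, which is the claim. There is no genuine obstacle; the lemma is a short continuity statement whose role in the larger argument is to license the passage to the limit in the standard ADMM convergence template of Wang \emph{et al.}, so that the descent estimate of Lemma \ref{lemma1}, the subgradient estimate of Lemma \ref{d}, and the boundedness of Lemma \ref{bound} can be combined to conclude that any accumulation point is a stationary point of $L$.
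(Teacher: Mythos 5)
Your proof is correct, but it takes a genuinely different route from the paper. You observe that in the finite-dimensional discretized setting every summand of $L$ --- including $f(U)=\beta_1\|DU\|_1+\beta_2\|U\odot(1-U)\|_1$ --- is continuous, so the lemma is an immediate consequence of continuity along the convergent subsequence. The paper instead follows the Wang--Yin--Zeng template for nonconvex ADMM: it only assumes $L$ is lower semicontinuous, which yields $\liminf_s L(U^{k_s},V^{k_s},P^{k_s})\ge L(U^*,V^*,P^*)$, and then recovers the reverse inequality on the only ``potentially discontinuous'' term $f$ by exploiting that $U^{k_s}$ minimizes $L(\cdot,V^{k_s-1},P^{k_s-1})$, so that $\limsup_s f(U^{k_s})\le f(U^*)$. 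Your argument is shorter and exploits the concrete structure of this particular $f$; the paper's argument is the one that survives if $f$ is merely l.s.c.\ --- which is the relevant situation here if the box constraint $0\le U\le 1$ of problem \eqref{relax} is folded into $f$ as an indicator function (that is how the constrained $U$-subproblem fits the unconstrained ADMM formalism), or if the penalty were replaced by a genuinely discontinuous regularizer such as an $\ell_0$ term. In the indicator case your continuity claim is not literally true for the extended-real-valued $f$, but it is easily repaired by noting that all iterates and the limit point lie in the closed feasible box, so no real gap arises. Both proofs are valid; yours buys brevity, the paper's buys generality and consistency with the framework it cites.
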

\begin{proof}
    We first note that $L(U^{k_s}, V^{k_s}, P^{k_s})$ is monotonic nonincreasing, which implies the convergence of $L(U^{k_s}, V^{k_s}, P^{k_s}).$ Since $L$ is lower semicontinuous (l.s.c.), we have	
 \begin{equation}\label{luvp1}
		\lim_{s\rightarrow \infty} L(U^{k_s}, V^{k_s}, P^{k_s}) \ge L(U^*, V^*, P^*).
	\end{equation}
	Because the only potentially discontinuous terms in $L$ is $f(U)$ , we have
	
 \begin{equation}
		\lim_{s\rightarrow \infty} L(U^{k_s}, V^{k_s}, P^{k_s})-L(U^*, V^*, P^*) \le \limsup_{s\rightarrow \infty} f(U^{k_s})-f(U^*) .\label{luvp2}
	\end{equation}
 However, because $U^{k_s}$ is the optimal solution for the subproblem	
 \begin{equation*}
		\min_{U} L(U,V^{k_s-1},P^{k_s-1}),
	\end{equation*}
	we know $L(U^{k_s},V^{k_s-1},P^{k_s-1}) \le L(U^*,V^{k_s-1},P^{k_s-1})$. Since the only discontinuous part of $L(U,V,P)$ is $f(U)$, we have
$$\limsup_{s\rightarrow \infty}L(U^{k_s},V^{k_s-1},P^{k_s-1})- L(U^*,V^{k_s-1},P^{k_s-1})=\limsup_{s\rightarrow \infty}  f(U^{k_s})-f(U^*) ,$$
	then $\limsup_{s\rightarrow \infty}  f(U^{k_s})-f(U^*) \le 0$. Hence,  by virtual of \eqref{luvp1} and \eqref{luvp2},
	
 \begin{equation*}
		\lim_{s\rightarrow \infty} L(U^{k_s}, V^{k_s}, P^{k_s})-L(U^*, V^*, P^*) =0.
	\end{equation*}
\end{proof}
Finally, we establish the proof of theorem \ref{thm}:
\begin{proof}
By Lemma \ref{bound}, the sequence $(U^k,V^k,P^k)$ is bounded,
so there exists a convergent subsequence denoted by $(U^{k_s},V^{k_s}, P^{k_s})$ and
a limit point $(U^*, V^*, P^*)$, such that $(U^{k_s},V^{k_s}, P^{k_s}) \rightarrow (U^*, V^*, P^*)$ as $s\rightarrow +\infty$.
By Lemma \ref{lemma1} and \ref{bound}, $L(U^{k_s},V^{k_s},P^{k_s})$ is monotonically nonincreasing and lower bounded, 
and therefore $||V^{k_s} - V^{k_{s+1}}|| \rightarrow 0$ as $s \rightarrow 0$. Based on \ref{d}, 
there exists $\bar{d}^s\in\partial_U L(U^{k_s},V^{k_s},P^{k_s})$ such that $||\bar{d}^s|| \rightarrow 0$.
Based on Lemma \ref{limit}, $L(U^*,V^*, P^*) = lim_{s\rightarrow \infty} L(U^{k_s}, V^{k_s}, P^{k_s})$.
By definition of general subgradient, we have	
 \begin{equation*}
		L(U,V^{k_s},P^{k_s})-L(U^{k_s},V^{k_s},P^{k_s})-<\bar{d}^s,U-U^{k_s}> \ge 0, \quad \forall U.
	\end{equation*}
Taking limit of $s$, since $L(U,V,P)$ is continuous with respect to $V$ and $P$, we have	
 \begin{equation*}
		L(U,V^*,P^*)-L(U^*,V^*,P^*) \ge 0, \quad \forall U.
	\end{equation*}
Therefore, $0 \in \partial_U L(U^*,V^*, P^*)$.\\
By the optimality of $V^{k_s}$, we know that 
\begin{eqnarray*}
0=\partial_{V}L(U^{k_s},V^{k_s},P^{k_s-1})=\nabla h(V^{k_s})+P^{k_s-1}+\rho(V^{k_s}-HU^{k_s})	\end{eqnarray*}
Clearly, $\partial_V L(U,V,P)$ is continuous with respect to $U, V, P$, then by Lemma \ref{VP}, we have
\begin{eqnarray*}
\partial_{V}L(U^*,V^*,P^*)=\nabla h(V^*)+P^*+\rho(V^*-HU^*)\\
=\lim_{k\rightarrow\infty}\nabla h(V^{k_s})+P^{k_s-1}+\rho(V^{k_s}-HU^{k_s})=0 .
\end{eqnarray*}
Still by the optimality of $V^{k_s}$ ,
$$\rho(V^{k_s}-HU^{k_s})=-\nabla h(V^{k_s})-P^{k_s-1}=\nabla h(V^{k_s-1})-\nabla h(V^{k_s}),$$
then we have	
 $$\partial_P L(U^*,V^*, P^*)=V^*-HU^*=\lim_{s\rightarrow\infty}V^{k_s}-HU^{k_s}=0.$$
Here we use the Lipschitz property of $\nabla h$ and Lemma \ref{VP}.
So $0 \in \partial L(U^*,V^*, P^*)$. 
\end{proof}

\subsection{The ADMM algorithm with threshold truncation}\label{sect32}
\subsubsection{U-subproblem}
For given $V^{k}, P^k$, the U-update step in \eqref{admm} reduces to solving:
\begin{equation}\label{Usub}
	U^{k+1}=argmin_U||HU-W||_2^2+ \beta_1||DU||_1+\beta_2||U\odot(1-U)||_1,
\end{equation}
subject to $0\le U \le 1$, where
\begin{equation}\label{eq2}
	W=V^{k}+\frac{1}{\rho} P^k.
\end{equation}
Let
\begin{equation*}
	\Phi(U)=(\beta_1DU,\beta_2U\odot(1-U)),
\end{equation*}
we can solve problem \eqref{Usub} using split Bregman iteration method \cite{goldstein2009split}:
\begin{eqnarray}\label{breg}
\left\{\begin{array}{l}
	U^{k+1}=\arg\min_U ||HU-W||_2^2+ \frac{\gamma}{2}||d^k-\Phi(U)-b^k||_2^2,\\
	d^{k+1}=\arg\min_d  ||d||_1 + \frac{\gamma}{2}||d-\Phi(U^{k+1})-b^k||_2^2,\\
	b^{k+1}=b^{k}+\Phi(U^{k+1})-d^{k+1}.
	\end{array}\right.
\end{eqnarray}
Denote
\begin{eqnarray*}
    &&F(U)=||HU-W||_2^2+\frac{\gamma}{2}||d-\Phi(U)-b||_2^2\nonumber\\
	&&\quad =||HU-W||_2^2+\frac{\gamma}{2}(||d_1-\beta_1DU-b_1||_2^2+||d_2-\beta_2 U\odot(1-U)-b_2||_2^2),
\end{eqnarray*}
then by a direct calculation
\begin{eqnarray}\label{gradient}
	&&\nabla F(U) = 2Re\{H^*(HU-W)\}-\gamma\beta_1 D^T(d_1-\beta_1DU-b_1)\nonumber\\
	&&\quad+\gamma\beta_2(d_2-\beta_2 U\odot(1-U)-b_2)\odot(2U-1).
\end{eqnarray}
The first problem is solved with gradient decent method. The initial value is $\tilde{U}^0=U^k$, the iteration is 
\begin{equation}
	\tilde{U}^{m+1}=\tilde{U}^m-\eta_m \nabla F(\tilde{U}^m), m=0,1,...\label{gda}
\end{equation}
Here the coefficient $\eta_m$ is given by the Armijo line search method. Armijo rule allows for the step-size to be computed using only local properties of the objective, as opposed to other approaches that use global quantities like the gradient’s Lipschitz constant. The selection process of step size works as following. Fix parametes $0 < \alpha < 0.5, 0 < \beta < 1$, for the objective function $f$, start with an initial step size $t = t_0$, when
\begin{equation}\label{armijo1}
	f(x-t\nabla f(x))>f(x)-\alpha t||\nabla f(x)||^2,
\end{equation}
we update step size as $t = \beta t$. We repeat this process until \eqref{armijo1} fails, then choose $t$ as the suitable step size. When the convergence is achieved at $m=M$, we let $U^{k+1}=\tilde{U}^M$. To make the mask $U$ satisfy  $0\le U\le1$, we project $U$ onto $[0,1]$ by
\begin{equation*}
	U=\min\{\max\{0,U\},1\}.
\end{equation*}
Finally, we update $d$ using the shrinkage method,
\begin{equation}\label{pl}
	d^{k+1}=shrink(\Phi(U^{k+1})+b^k,1/\gamma).
\end{equation}
\subsubsection{V-subproblem}
For given $U^{k+1},P^k$, after some calculations, the V-update step becomes to the following problem,
\begin{equation}\label{Vsub}
	V^{k+1}=\arg\min_{V} ||Sig_a(|V|^2)-I||_2^2+\frac{\rho}{2}||V-W^k||_2^2, 
\end{equation}
where
\begin{equation*}
	W^k=HU^{k+1}-\frac{1}{\rho} P^k.
\end{equation*}
Fortunately, the V-subproblem could be decomposed to a series of single variable optimization problem:
\begin{equation}\label{single}
	V_i^{k+1}=\arg\min_{V_i} (Sig_a(|V_i|^2)-I_i)^2+\frac{\rho}{2}(V_i-W_i^{k})^2.
\end{equation}
As we have mentioned before, $Sig_a$ is a smoothness of the truncation function. Then, to avoid the nonlinear iteration to solve \eqref{single},  we solve the following problem directly,
\begin{equation}\label{singleT}
	V_i^{k+1}=\arg\min_{V_i} (T(|V_i|^2)-I_i)^2+\frac{\rho}{2}(V_i-W_i^{k})^2.
\end{equation}
Where $T$ is the operator defined in \eqref{threshd}.
Problem \eqref{singleT} has a closed-form solution:
\begin{equation}\label{V}
V^{k+1}_i=
\left\{
\begin{array}{ll}
	W^{k}_i&,\quad |W_i^k|<\sqrt{tr} \quad and \quad I_{i}=0,\\
	W^{k}_i&,\quad |W_i^k|<\sqrt{tr} \quad and \quad I_{i}=1  \quad and \quad \frac{\rho}{2}|W_{i}^{k}|-\sqrt{tr}|^2>1,\\
	W^{k}_i&,\quad |W_i^k|>\sqrt{tr} \quad and \quad I_{i}=0  \quad and \quad \frac{\rho}{2}|W_{i}^{k}|-\sqrt{tr}|^2>1,\\
	W^{k}_i&,\quad |W_i^k|>\sqrt{tr} \quad and \quad I_{i}=1,\\
	\sqrt{tr} & \cdot sgn(W^{k}_i), \quad else.
\end{array}
\right.
\end{equation}
Figure \ref{subv} shows the curves of objective function in \eqref{singleT} with $W_i^k=0.2, \rho=1,tr=0.3$ and different $I_i$. We can see that $V_i^{k+1}=\sqrt{tr} sgn(W^k_i)=\sqrt{0.3}$ and $V_i^{k+1}=W^{k}_i$ take the minimums for $I_i=1$ and $I_i=0$ respectively, which confirms the formula \eqref{V}.
\begin{figure}[htbp]
	\centering
	\includegraphics[scale=0.45]{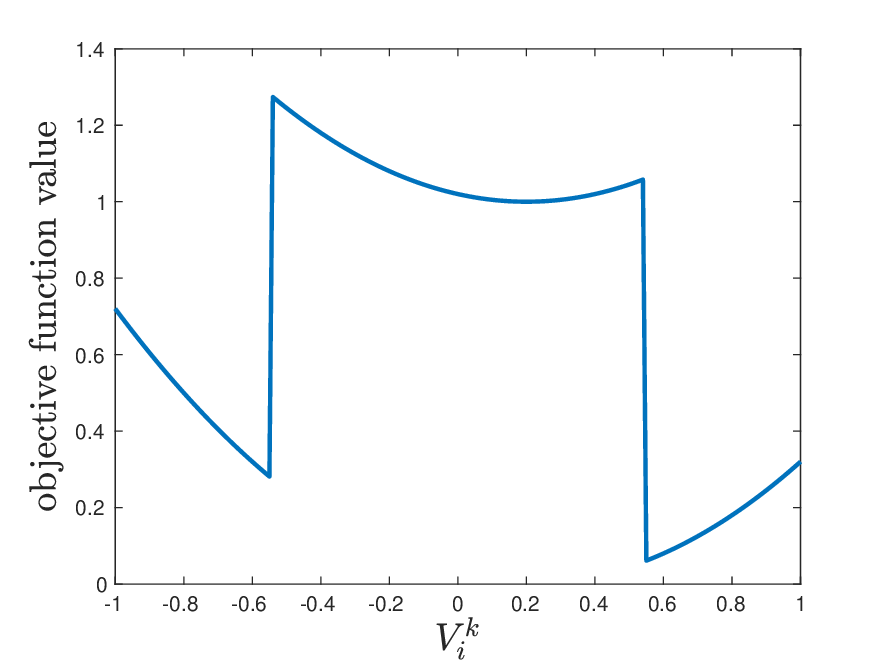}
	\includegraphics[scale=0.45]{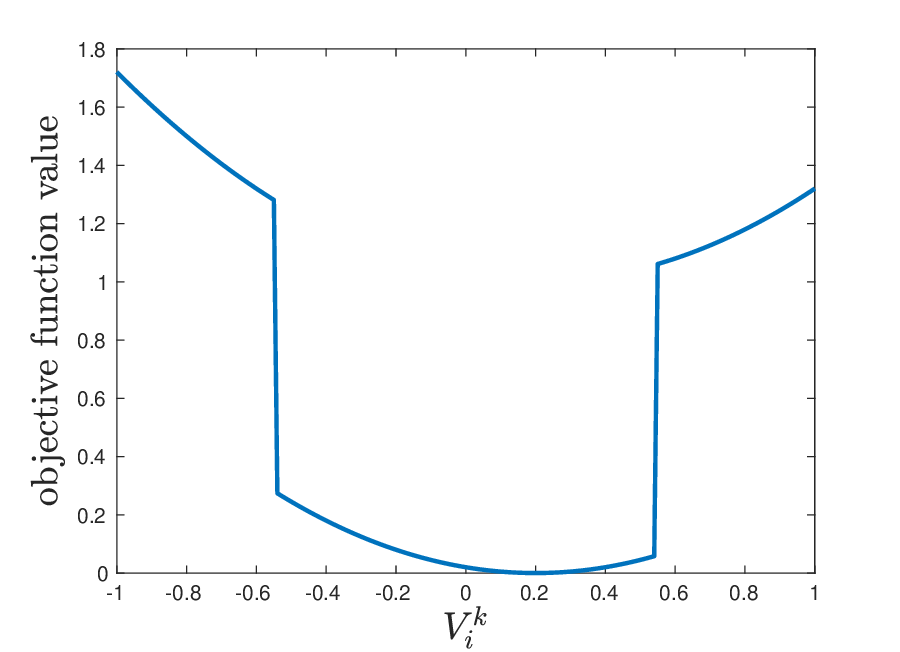}
	\caption{The curves of objective function with $W_i^k=0.2, \rho=1,tr=0.3$.  The left is with $I_i=1$, the right is with $I_i=0$. }
	\label{subv}
\end{figure}
Now, as summary, we give the detailed Algorithm \ref{alg-admm} for minimization problem (\ref{relax}).
\begin{algorithm}[t]
	\caption{The ADMM with threshold function approximation}
	\label{alg-admm}
	\hspace*{0.02in} {\bf Input:} desired image $I$\\
	\hspace*{0.02in} {\bf Output:} optimized mask $U$\\
	\hspace*{0.02in} {\bf Parameters:}$\beta_1,\beta_2,\rho,\gamma,\eta$\\
	\hspace*{0.02in} {\bf Initialize:} $U=I,V=H*U,P=1$
	\begin{algorithmic}
		\While{error$>$tol}
		\State compute $W=V+P/\rho$
		\State Initialize $d=\Phi(U),b=0$
		\While{error$>$tol}
		\State update U with gradient decent method \eqref{gda}
%		\begin{equation*}
%			U=U-\eta g
%		\end{equation*}
		\State project $U$ onto $[0,1]$:
		\begin{equation*}
			U=min\{max\{0,U\},1\}
		\end{equation*}
		\State update $d=shrink(b+\Phi(U),\frac{1}{\gamma})$
		\State update $b=b+\Phi(U)-d$
		\EndWhile
		\State update $V$ with (\ref{V})
		\State update $P$ with $P=P+\rho(V-HU)$
		\EndWhile\\
	    \Return $U$
	\end{algorithmic}
\end{algorithm}

\section{Numerical examples}\label{sect4}
In this section, we will give some numerical examples to show the performance of our algorithm.
The parameters in all our experiments are chosen as follows: the wavelength $\lambda$ = 193nm, NA = 0.85, pixel size = 5 nm, and the size of the point spread function $H$ is $100\times 100$. We choose $tr=0.3$ in the threshold function \eqref{threshd}. The algorithm are implemented with Matlab and all the numerical experiments are done on a laptop with 1.8GHz CPU.

The edge placement error(EPE) is defined as a picture:
\begin{equation*}
	EPE=|I(U)-I|,
\end{equation*}
which shows that the error occurs at the edge of the image and is an important property to evaluate the optimality of inversion. Then we can define the error
\begin{equation*}
	error=||EPE||_2
\end{equation*}
to measure the distance between the output image with the target image, here $\|\cdot\|_2$ represents the $l^2 $ norm for vectorized image as mentioned in Section \ref{sect3}.  We give numerical examples at the best focus $D$ = 0nm and with defocus $D$ = 50nm, respectively in \eqref{defocus}. When the input is the target pattern, the output image is blurred.  After optimization, the synthesized mask will have some small and isolated assist features then the optimal mask achieves satisfactory output patterns where the corners of the image are well preserved.

\subsection{Choosing parameters}
In this example, we will discuss the influence of the parameters $\rho, \beta_1, \beta_2$ in \eqref{lag} and $\gamma$ in \eqref{breg}.  Figure (\ref{ex0}) shows the target image in this example. The size of the image is $144\times144$ and the blue part is valued 0, and the yellow represents value 1.
\begin{figure}[htbp]
	\centering
	\includegraphics[scale=0.4]{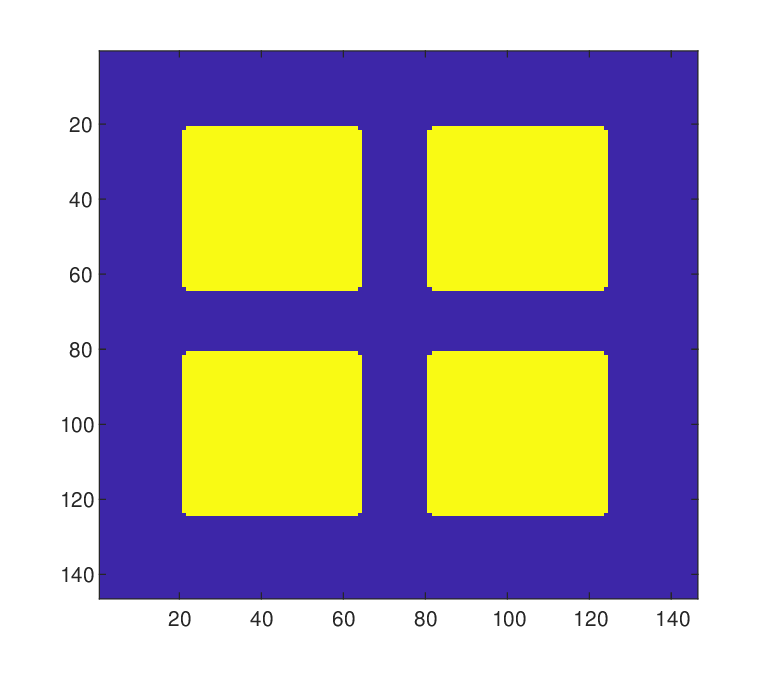}
	\caption{The target pattern}
	\label{ex0}
\end{figure}
Figure (\ref{rho}) shows error decreasing in the optimization process under different $\rho$. We can see that with the increase of $\rho$, the decline is more consistent, but more gently. So $\rho=10$ is a good choice. Figure (\ref{gamma}) gives the error decreasing for different $\gamma$. We can see that $\gamma=30$ is a suitable choice. Then in the followings, we fix $\rho=10$ and $\gamma=30$. Figure (\ref{beta1}) shows the optimized masks with different $\beta_1$. Figure (\ref{beta2}) shows the optimized images with different $\beta_2$. These numerical examples suggest us to choose $\beta_1=0.01,\beta_2=0.015$ in the following examples.
Here we remark that the choices are quite heuristic because it is quite hard to find the optimal parameters.
\begin{figure}[htbp]
	\centering
	\includegraphics[scale=0.6]{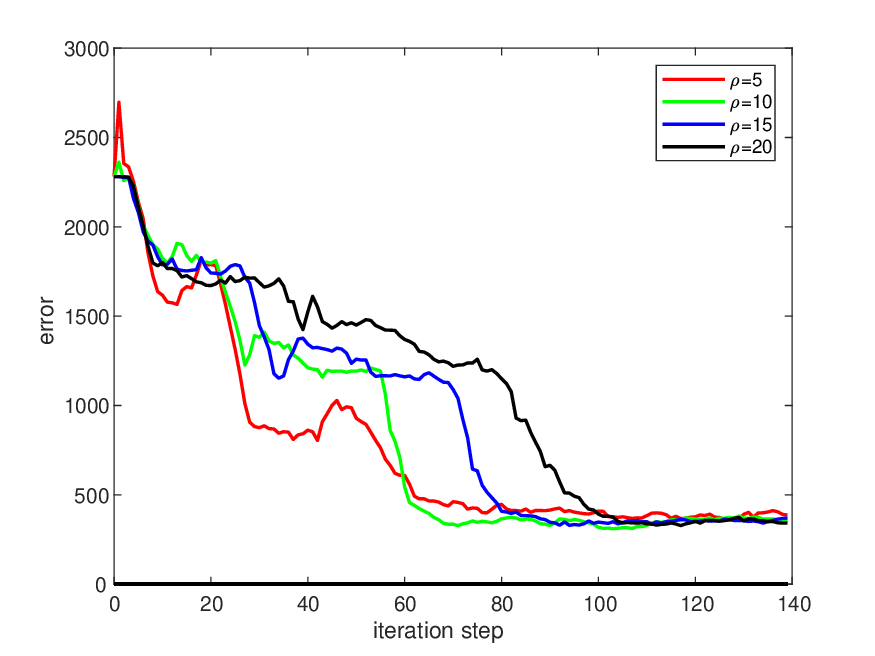}
	\caption{Error decreasing with respect to $\rho$, with $\gamma=50, \beta_1=0.005,\beta_2=0.01$}
	\label{rho}
\end{figure}
\begin{figure}[htbp]
	\centering
	\includegraphics[scale=0.6]{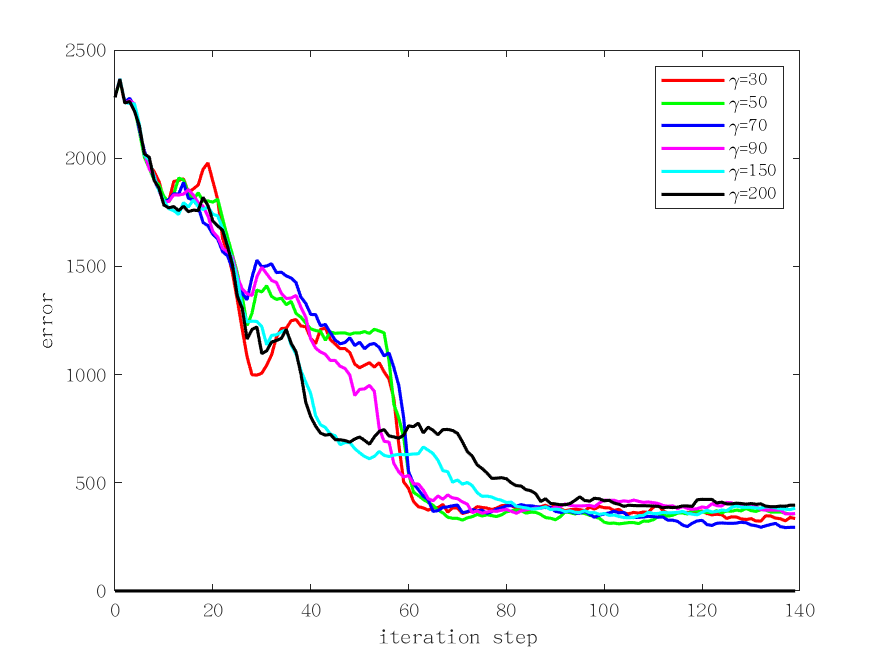}
	\caption{A heuristic choice of $\gamma$, with fixed $\rho=10, \beta_1=0.005,\beta_2=0.01$.}
	\label{gamma}
\end{figure}
\begin{figure}[htbp]
	\centering
	\includegraphics[scale=0.6]{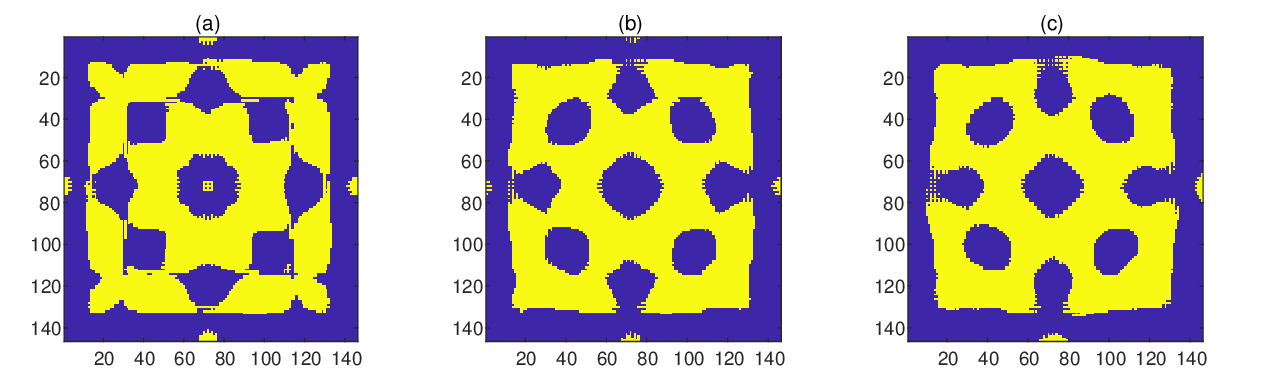}
	\caption{The optimal mask with different $\beta_1$. From left to right $\beta_1=0.005,0.01,0.015$.}
	\label{beta1}
\end{figure}
\begin{figure}[htbp]
	\centering
	\includegraphics[scale=0.6]{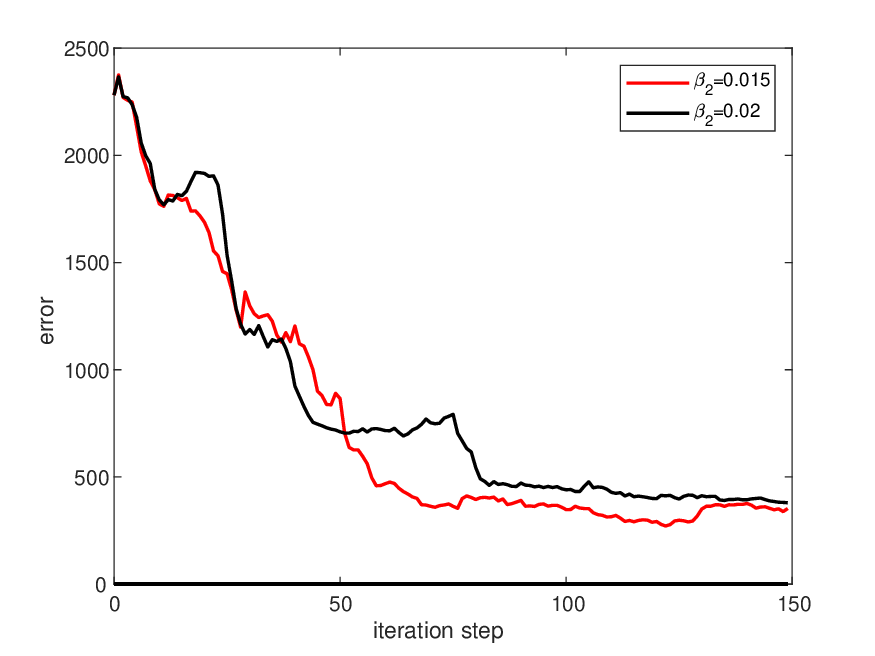}
	\caption{The output pattern with different $\beta_2$.}
	\label{beta2}
\end{figure}

\subsection{Numerical experiments}
Figures (\ref{ex1})–(\ref{ex3}) show the comparative results at
the best focus $D = 0nm$ and under defocus $D = 50nm$. 
%The top row displays the results using the target pattern as input, whereas the second row is the results using the synthesized mask as input. The last two rows displays the corresponding results with defocus $D=50nm$.

Figure (\ref{ex1})(a) shows the target pattern consisting of 10 squares. Figure (\ref{ex1})(b) display the output patterns with target pattern as input. Figure (\ref{ex1})(c) is the edge placement error(EPE). Compared with the desired target pattern, we observe that the error is visually noticeable, especially for the regions located at the edge of each square. Figure (\ref{ex1})(d) shows the optimal mask generated by our proposed method at the best focus, Figure (\ref{ex1})(d) give the output pattern on wafer and Figure (\ref{ex1})(f) is the corresponding EPE. The last two rows displays the corresponding results under defocus $D=50nm$. As can be seen, some assist features near the squares are automatically generated in the minimization process, which would help to improve the overall pattern fidelity. In addition, the corner-rounding effect at the end of each square is reduced compared to initial pattern, and the optimal mask gives the satisfactory contours in the output patterns. 
\begin{figure}[ht]
	\centering
	\includegraphics[scale=0.6]{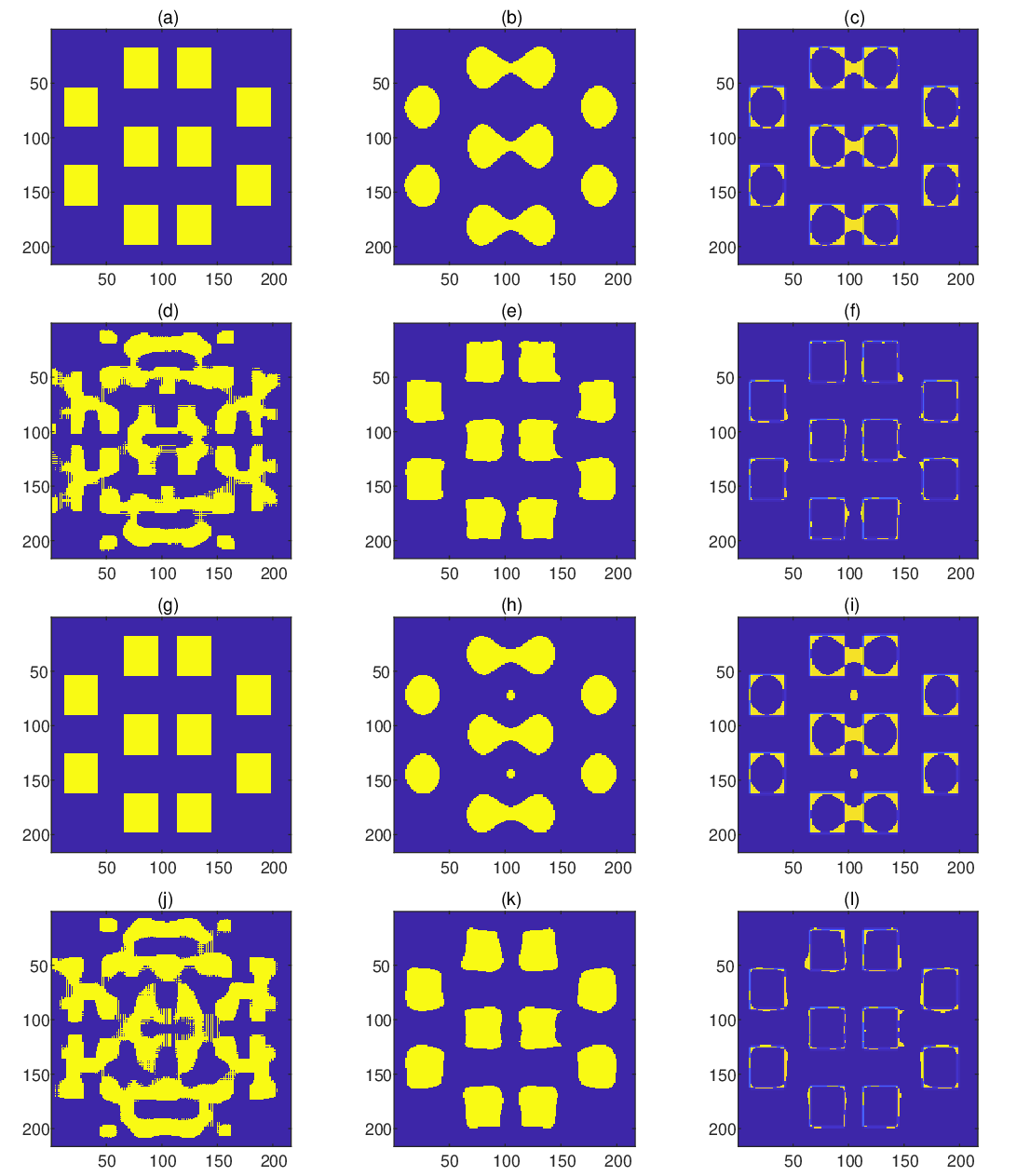}
	\caption{Left column: mask, Middle column: output pattern, Right column: EPE.}
	\label{ex1}
\end{figure}

Figure (\ref{ex2}) shows the example in which the target pattern contains long strips. From the first and third rows, we can find that the corners are round for the direct imaging with target pattern as input. By optimization with our proposed algorithm, the corners become sharp and the EPEs are significantly decreased, which can be found from the second and forth rows.
\begin{figure}[ht]
	\centering
	\includegraphics[scale=0.6]{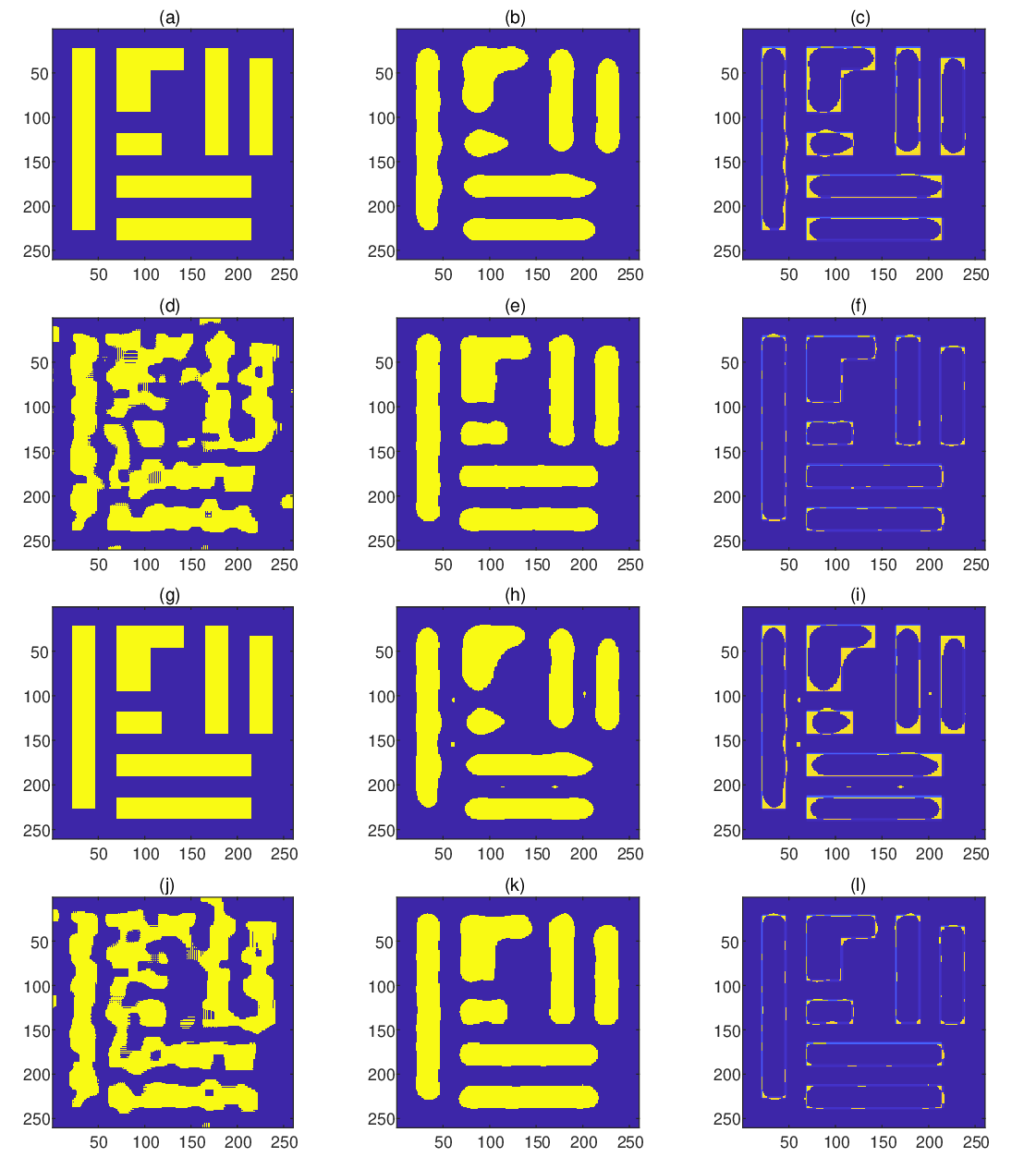}
	\caption{Left column: mask, Middle column: output pattern, Right column: EPE.}
	\label{ex2}
\end{figure}

Figure (\ref{ex3}) shows the example in which the target pattern contains small squares and long strips. For this kind of mixed pattern, the algorithm still works well. From the second and forth rows, the small details can be imaged with optimized mask patterns, even for imaging system with defocus.  
\begin{figure}[ht]
	\centering
	\includegraphics[scale=0.6]{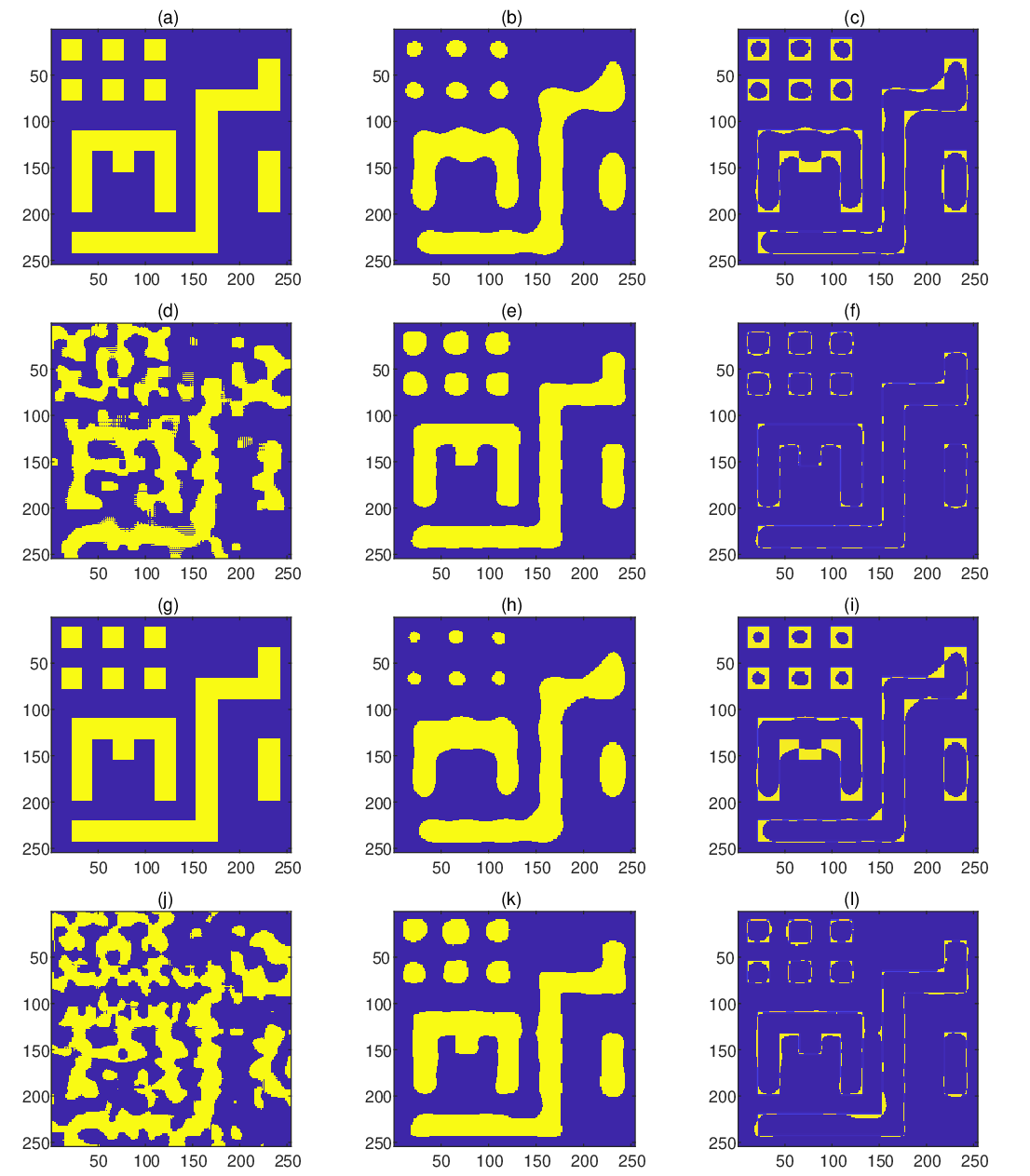}
	\caption{Left column: mask, Middle column: output pattern, Right column: EPE.}
	\label{ex3}
\end{figure}

\section{Conclusion}\label{sect5}
We have developed an ADMM method to solve the inverse lithography problem. The TV regularization is introduced to deal with ill-posedness of the inverse problem and sharp the optimized mask pattern. To deal with the TV regularization term, we propose the ADMM framework. To avoid the complicated computation of gradient of sigmoid function in gradient descent method, we use the threshold function in the ADMM subproblem directly instead of sigmoid. With this replacement, we find that the optimal solution to the subproblem is explicit. Furthermore, we prove the convergence of our proposed ADMM algorithm.  The parameters of the algorithm is chosen in a heuristic way. The numerical experiments illustrate the efficiency of the proposed algorithm.

\section*{Acknowledgments}
This research is supported partly by National Key  R\&D Program of China 2019YFA0709600, 2019YFA0709602 and by NSFC under the grant 11871300.

%\clearpage
\bibliographystyle{unsrt}
\bibliography{ref}

@book{adams2003sobolev,
  title={Sobolev spaces},
  author={Adams, Robert A and Fournier, John JF},
  year={2003},
  publisher={Elsevier}
}

@book{ma2011computational,
  title={Computational lithography},
  author={Ma, Xu and Arce, Gonzalo R},
  year={2011},
  publisher={John Wiley \& Sons}
}

@article{ma2007generalized,
  title={Generalized inverse lithography methods for phase-shifting mask design},
  author={Ma, Xu and Arce, Gonzalo R},
  journal={Optics Express},
  volume={15},
  number={23},
  pages={15066--15079},
  year={2007},
  publisher={Optica Publishing Group}
}

@article{chen1999augmented,
  title={An augmented Lagrangian method for identifying discontinuous parameters in elliptic systems},
  author={Chen, Zhiming and Zou, Jun},
  journal={SIAM Journal on Control and Optimization},
  volume={37},
  number={3},
  pages={892--910},
  year={1999},
  publisher={SIAM}
}

@book{wong2001resolution,
  title={Resolution enhancement techniques in optical lithography},
  author={Wong, Alfred Kwok-Kit},
  volume={47},
  year={2001},
  publisher={SPIE press}
}

@article{shen2019semi,
  title={Semi-implicit level set formulation for lithographic source and mask optimization},
  author={Shen, Yijiang and Peng, Fei and Zhang, Zhenrong},
  journal={Optics Express},
  volume={27},
  number={21},
  pages={29659--29668},
  year={2019},
  publisher={Optical Society of America}
}

@inproceedings{poonawala2008model,
  title={Model-based assist feature placement: an inverse imaging approach},
  author={Poonawala, Amyn and Painter, Benjamin and Mayhew, Jeffrey},
  booktitle={Photomask Technology 2008},
  volume={7122},
  pages={277--286},
  year={2008},
  organization={SPIE}
}

@article{poonawala2007double,
  title={Double-exposure mask synthesis using inverse lithography},
  author={Poonawala, Amyn A and Milanfar, Peyman},
  journal={Journal of Micro/Nanolithography, MEMS, and MOEMS},
  volume={6},
  number={4},
  pages={043001},
  year={2007},
  publisher={SPIE}
}

@article{chan2008initialization,
  title={Initialization for robust inverse synthesis of phase-shifting masks in optical projection lithography},
  author={Chan, Stanley H and Wong, Alfred K and Lam, Edmund Y},
  journal={Optics Express},
  volume={16},
  number={19},
  pages={14746--14760},
  year={2008},
  publisher={Optical Society of America}
}

@inproceedings{chan2008inverse,
  title={Inverse image problem of designing phase shifting masks in optical lithography},
  author={Chan, Stanley H and Lam, Edmund Y},
  booktitle={2008 15th IEEE International Conference on Image Processing},
  pages={1832--1835},
  year={2008},
  organization={IEEE}
}

@article{peng2011gradient,
  title={Gradient-based source and mask optimization in optical lithography},
  author={Peng, Yao and Zhang, Jinyu and Wang, Yan and Yu, Zhiping},
  journal={IEEE Transactions on image processing},
  volume={20},
  number={10},
  pages={2856--2864},
  year={2011},
  publisher={IEEE}
}

@article{yang2022generic,
  title={Generic Lithography Modeling with Dual-band Optics-Inspired Neural Networks},
  author={Yang, Haoyu and Li, Zongyi and Sastry, Kumara and Mukhopadhyay, Saumyadip and Kilgard, Mark and Anandkumar, Anima and Khailany, Brucek and Singh, Vivek and Ren, Haoxing},
  journal={arXiv preprint arXiv:2203.08616},
  year={2022}
}

@article{choy2012robust,
  title={A robust computational algorithm for inverse photomask synthesis in optical projection lithography},
  author={Choy, Siu Kai and Jia, Ningning and Tong, Chong Sze and Tang, Man Lai and Lam, Edmund Y},
  journal={SIAM Journal on Imaging Sciences},
  volume={5},
  number={2},
  pages={625--651},
  year={2012},
  publisher={SIAM}
}

@article{granik2006fast,
	title={Fast pixel-based mask optimization for inverse lithography},
	author={Granik, Yuri},
	journal={Journal of Micro/Nanolithography, MEMS, and MOEMS},
	volume={5},
	number={4},
	pages={043002},
	year={2006},
	publisher={International Society for Optics and Photonics}
}

@book{wong2005optical,
	title={Optical imaging in projection microlithography},
	author={Wong, Alfred Kwok-Kit},
	volume={66},
	year={2005},
	publisher={SPIE press}
}

@article{li2013efficient,
	title={Efficient source and mask optimization with augmented Lagrangian methods in optical lithography},
	author={Li, Jia and Liu, Shiyuan and Lam, Edmund Y},
	journal={Optics express},
	volume={21},
	number={7},
	pages={8076--8090},
	year={2013},
	publisher={Optical Society of America}
}

@article{yang2019gan,
	title={{GAN-OPC}: Mask optimization with lithography-guided generative adversarial nets},
	author={Yang, Haoyu and Li, Shuhe and Deng, Zihao and Ma, Yuzhe and Yu, Bei and Young, Evangeline FY},
	journal={IEEE Transactions on Computer-Aided Design of Integrated Circuits and Systems},
	volume={39},
	number={10},
	pages={2822--2834},
	year={2019},
	publisher={IEEE}
}

@article{ma2011pixel,
	title={Pixel-based {OPC} optimization based on conjugate gradients},
	author={Ma, Xu and Arce, Gonzalo R},
	journal={Optics express},
	volume={19},
	number={3},
	pages={2165--2180},
	year={2011},
	publisher={Optical Society of America}
}

@article{wang2019global,
	title={Global convergence of {ADMM} in nonconvex nonsmooth optimization},
	author={Wang, Yu and Yin, Wotao and Zeng, Jinshan},
	journal={Journal of Scientific Computing},
	volume={78},
	number={1},
	pages={29--63},
	year={2019},
	publisher={Springer}
}

@article{wen2012alternating,
	title={Alternating direction methods for classical and ptychographic phase retrieval},
	author={Wen, Zaiwen and Yang, Chao and Liu, Xin and Marchesini, Stefano},
	journal={Inverse Problems},
	volume={28},
	number={11},
	pages={115010},
	year={2012},
	publisher={IOP Publishing}
}

@article{gabay1976dual,
	title={A dual algorithm for the solution of nonlinear variational problems via finite element approximation},
	author={Gabay, Daniel and Mercier, Bertrand},
	journal={Computers \& mathematics with applications},
	volume={2},
	number={1},
	pages={17--40},
	year={1976},
	publisher={Elsevier}
}

@inproceedings{chartrand2013nonconvex,
	title={A nonconvex {ADMM} algorithm for group sparsity with sparse groups},
	author={Chartrand, Rick and Wohlberg, Brendt},
	booktitle={2013 IEEE international conference on acoustics, speech and signal processing},
	pages={6009--6013},
	year={2013},
	organization={IEEE}
}

@article{lai2014splitting,
	title={A splitting method for orthogonality constrained problems},
	author={Lai, Rongjie and Osher, Stanley},
	journal={Journal of Scientific Computing},
	volume={58},
	number={2},
	pages={431--449},
	year={2014},
	publisher={Springer}
}

@article{goldstein2009split,
	title={The split Bregman method for {L1}-regularized problems},
	author={Goldstein, Tom and Osher, Stanley},
	journal={SIAM journal on imaging sciences},
	volume={2},
	number={2},
	pages={323--343},
	year={2009},
	publisher={SIAM}
}

@article{pang2021inverse,
  title={Inverse lithography technology: 30 years from concept to practical, full-chip reality},
  author={Pang, Linyong},
  journal={Journal of Micro/Nanopatterning, Materials, and Metrology},
  volume={20},
  number={3},
  pages={030901},
  year={2021},
  publisher={International Society for Optics and Photonics}
}

@article{poonawala2007mask,
  title={Mask design for optical microlithography—an inverse imaging problem},
  author={Poonawala, Amyn and Milanfar, Peyman},
  journal={IEEE Transactions on Image Processing},
  volume={16},
  number={3},
  pages={774--788},
  year={2007},
  publisher={IEEE}
}

@article{ma2019nonlinear,
  title={Nonlinear compressive inverse lithography aided by low-rank regularization},
  author={Ma, Xu and Wang, Zhiqiang and Zhu, Jianchen and Zhang, Shengen and Arce, Gonzalo R and Zhao, Shengjie},
  journal={Optics express},
  volume={27},
  number={21},
  pages={29992--30008},
  year={2019},
  publisher={Optical Society of America}
}

@inproceedings{pang2006inverse,
  title={Inverse lithography technology ({ILT}): What is the impact to the photomask industry?},
  author={Pang, Linyong and Liu, Yong and Abrams, Dan},
  booktitle={Photomask and Next-Generation Lithography Mask Technology XIII},
  volume={6283},
  pages={62830X},
  year={2006},
  organization={International Society for Optics and Photonics}
}

@article{shen2009level,
  title={Level-set-based inverse lithography for photomask synthesis},
  author={Shen, Yijiang and Wong, Ngai and Lam, Edmund Y},
  journal={Optics Express},
  volume={17},
  number={26},
  pages={23690--23701},
  year={2009},
  publisher={Optical Society of America}
}

@article{sun2021global,
  title={Global Source Optimisation Based on Adaptive Nonlinear Particle Swarm Optimisation Algorithm for Inverse Lithography},
  author={Sun, Haifeng and Du, Jing and Jin, Chuan and Feng, Jinhua and Wang, Jian and Hu, Song and Liu, Junbo},
  journal={IEEE Photonics Journal},
  volume={13},
  number={4},
  pages={1--7},
  year={2021},
  publisher={IEEE}
}

@inproceedings{erdmann2004toward,
  title={Toward automatic mask and source optimization for optical lithography},
  author={Erdmann, Andreas and Fuehner, Tim and Schnattinger, Thomas and Tollkuehn, Bernd},
  booktitle={Optical Microlithography XVII},
  volume={5377},
  pages={646--657},
  year={2004},
  organization={International Society for Optics and Photonics}
}

@article{goodman2005,
  author = {Goodman, Joseph W},
  journal = {Introduction to Fourier optics, 3rd ed., by JW Goodman. Englewood, CO: Roberts \& Co. Publishers, 2005},
  title = {Introduction to Fourier optics},
  volume = 1,
  year = 2005
}

@inproceedings{poonawala2006opc,
  title={{OPC} and {PSM} design using inverse lithography: a nonlinear optimization approach},
  author={Poonawala, Amyn and Milanfar, Peyman},
  booktitle={Optical Microlithography XIX},
  volume={6154},
  pages={61543H},
  year={2006},
  organization={International Society for Optics and Photonics}
}

@article{osher2005iterative,
	title={An iterative regularization method for total variation-based image restoration},
	author={Osher, Stanley and Burger, Martin and Goldfarb, Donald and Xu, Jinjun and Yin, Wotao},
	journal={Multiscale Modeling \& Simulation},
	volume={4},
	number={2},
	pages={460--489},
	year={2005},
	publisher={SIAM}
}

@article{chambolle2004algorithm,
	title={An algorithm for total variation minimization and applications},
	author={Chambolle, Antonin},
	journal={Journal of Mathematical imaging and vision},
	volume={20},
	number={1},
	pages={89--97},
	year={2004},
	publisher={Springer}
}

@article{chan1999nonlinear,
	title={A nonlinear primal-dual method for total variation-based image restoration},
	author={Chan, Tony F and Golub, Gene H and Mulet, Pep},
	journal={SIAM journal on scientific computing},
	volume={20},
	number={6},
	pages={1964--1977},
	year={1999},
	publisher={SIAM}
}

@article{strong2003edge,
	title={Edge-preserving and scale-dependent properties of total variation regularization},
	author={Strong, David and Chan, Tony},
	journal={Inverse problems},
	volume={19},
	number={6},
	pages={S165},
	year={2003},
	publisher={IOP Publishing}
}

@book{SIAMB0000033,
 title={Image processing and analysis: variational, {PDE}, wavelet, and stochastic methods},
 author={Chan, Tony F. and Shen, Jianhong},
 year={2005},
 publisher={Society for Industrial and Applied Mathematics},
 pages={423},
 isbn={9780898715897},
 url={https://portal.igpublish.com/iglibrary/search/SIAMB0000033.html}
}

@book{Engl2000,
author={Engl, Heinz W. and Hanke, Martin and Neubauer, Andreas},
title={Regularization of Inverse Problems},
year={2000},
publisher={Kluwer Academic Publishers},
isbn={9780792341574}
}

\end{document}